\titleformat{\subsection}[runin]
{\normalfont\large\bfseries}{\thesubsection}{1em}{}
\DeclareMathOperator{\Div}{Div}
\DeclareMathOperator{\supp}{supp}
\DeclareMathOperator{\Pic}{Pic}
\DeclareMathOperator{\Princ}{Princ}
\DeclareMathOperator{\Jac}{Jac}
\DeclareMathOperator{\PGCD }{PGCD }
\DeclareMathOperator{\Ker}{Ker}
\newtheorem{theo}{Théorème }[section]
\newtheorem{lem}{Lemme}[section]
\newtheorem{prop}{Proposition}[section]
\newtheorem{rem}{Remarque}[section]
\newtheorem{defi}{Definition}[section]
\newtheorem{nota}{Notation}[section]
\newtheorem{exemple}{Exemple}[section]
\begin{document} 
\begin{center}
\section*{ Description de jacobiennes généralisées de courbes hyperelliptiques singulières à travers des espaces de phases de systèmes de Mumford }

\bigskip

Yasmine FITTOUHI\\

\bigskip
\end{center}
\section{Introduction}
De nombreux systèmes intégrables à dimension finie peuvent être exprimés à l'aide de l'équation de Lax qui met en évidence un paramètre spectral et par conséquent une courbe spectrale. Ces courbes spectrales sont le point de départ d'une investigation algébro-géométrique; l'aspect algébrique de cette investigation sera adjointe aux jacobiennes de courbes spectrales lisses et aux jacobiennes généralisées de courbes spectrales singulières et l'aspect géométrique sera attribué aux champs de vecteurs qui sont définis par l'équation de Lax. Cet article sera dédié uniquement aux courbes spectrales hyperelliptiques singulières qui sont associées à des systèmes de Mumford, on s'intéressera à la complémentarité de ces deux approches mathématiques pour décrire un même objet et dévoiler leurs caractéristiques.

\subsection{}
En 1954, Rosenlicht introduisait les jacobiennes généralisées de courbes singulières \cite{RM}.
Soit $C'$ une courbe lisse compacte et soient $p_1,\cdots, p_s$ des points distincts de $C'$ composant le support du diviseur effectif $\mathfrak{m}=\sum\limits_{i=1}^{k} n_ip_i$. Au moyen de la courbe lisse $C'$ et du diviseur $\mathfrak{m}$ on peut construire une courbe singulière $C$ en un point $s\in C'$ définie en deux étapes, on retire de la courbe $C'$ le sous ensemble $\{p_1,\cdots, p_k\}$,
$$C_{reg}=C'-\{p_1,\cdots, p_s\},$$
puis on attribut un point $s$ représentant le support de $\mathfrak{m}$
$$C=C_{reg}\cup \{s\}.$$ 
On obtient une projection $C'\longrightarrow C$.
\begin{defi}
Soit $\mathcal{O}'$ le faisceau de $C'$. On note par $\mathcal{O}$ un faisceau de $C$ défini de la manière suivante:
$$\mathcal{O}_p=\left\{ \begin{array}{cc} \mathcal{O}'_p &\text{ si } p\in C_{reg}, \\ \mathbb{C}+I_s &\text{ si } p=s,\end{array}\right.$$
où $I_s$ est l'idéal composé des fonctions $f$ de $\mathcal{O}'_s$ tel que pour tout $1\leqslant i \leqslant k$, les fonctions $f$ sont nulles au point $p_i$ au moins d'ordre $n_i$.
 
\end{defi} 
\begin{nota}
Soit $D_1$ et $D_2$ deux diviseurs de $C'$ étrangers au support de diviseur $\mathfrak{m}$. On note $D_1\underset{\mathfrak{m}}{\sim}D_2$, s'il existe une fonction méromorphe de $C'$ telle que $(f)=D_1-D_2$ et $f-1$ est nulle au points $\{p_i\}_{1\leqslant i \leqslant k}$ au moins d'ordre $\{n_i\}_{1\leqslant i \leqslant k}$ (respectivement). On note par $\Pic(C)$ l'ensemble des classes d'équivalence $\Div(C)/\underset{\mathfrak{m}}{\sim}$
\end{nota}
\begin{defi}
La jacobienne generalisée de la courbe $C$ est le groupe algébrique $\Pic^0(C)$ composé des diviseurs $D\in \Pic(C)$ de degré zéro; notée $\Jac(C)$ ou $\Jac_{\mathfrak{m}}(C')$.

\end{defi}
La construction de la jacobienne géneralisée nous permet de faire la liaison entre $\Jac(C)$ et $\Jac(C')$ avec la proposition ci-dessous:
\begin{prop}
La suite suivante
\begin{align}0\longrightarrow \mathbb{C}^{*k-1}\times\mathbb{C}^{(\sum\limits_{i=1}^kn_i-k)}\longrightarrow\Jac(C)\longrightarrow\Jac(C')\longrightarrow 0,\end{align}
est exacte.
\end{prop}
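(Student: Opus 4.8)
Le plan est de construire les deux fl\`eches de la suite et d'identifier le noyau de la fl\`eche de droite avec le groupe affine annonc\'e, suivant la th\'eorie de Rosenlicht--Serre. La fl\`eche $\pi\colon\Jac(C)\to\Jac(C')$ envoie la classe modulo $\underset{\mathfrak{m}}{\sim}$ d'un diviseur sur sa classe d'\'equivalence lin\'eaire ordinaire. Elle est bien d\'efinie car la relation $\underset{\mathfrak{m}}{\sim}$ raffine l'\'equivalence lin\'eaire : si $(f)=D_1-D_2$ avec $f-1$ nulle aux $p_i$ aux ordres $n_i$, alors a fortiori $D_1\sim D_2$. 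C'est un morphisme de groupes, surjectif par le lemme de d\'eplacement, toute classe de $\Jac(C')$ poss\'edant un repr\'esentant de degr\'e z\'ero \'etranger au support de $\mathfrak{m}$.

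Je d\'ecrirais ensuite $\Ker\pi$. Un \'el\'ement du noyau est la classe modulo $\underset{\mathfrak{m}}{\sim}$ d'un diviseur $D$ \'etranger au support et lin\'eairement \'equivalent \`a z\'ero, donc de la forme $D=(g)$ avec $g$ m\'eromorphe, unique \`a un scalaire pr\`es. Comme $D$ est \'etranger au support, $g$ est une unit\'e en chaque $p_i$ et l'on peut former le jet tronqu\'e
$$\phi(g)=\bigl(g\bmod\mathfrak{m}_{p_i}^{n_i}\bigr)_{1\leqslant i\leqslant k}\in\prod_{i=1}^{k}\mathcal{O}_{p_i}^{*}/U_{p_i}^{(n_i)},\qquad U_{p_i}^{(n_i)}=1+\mathfrak{m}_{p_i}^{n_i}\mathcal{O}_{p_i}.$$
Le scalaire ind\'etermin\'e agissant diagonalement, $\phi$ induit une application
$$\bar\phi\colon\Ker\pi\longrightarrow\Bigl(\prod_{i=1}^{k}\mathcal{O}_{p_i}^{*}/U_{p_i}^{(n_i)}\Bigr)\big/\mathbb{C}^{*},$$
dont je v\'erifierais que c'est un isomorphisme de groupes. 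L'injectivit\'e est imm\'ediate : si $\phi(g)=\phi(c)$ pour un $c\in\mathbb{C}^{*}$, alors $f=g/c$ satisfait $(f)=D$ et $f-1$ s'annule aux $p_i$ \`a l'ordre $n_i$, donc $D\underset{\mathfrak{m}}{\sim}0$.

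Le point d\'elicat sera la surjectivit\'e de $\bar\phi$ : il faut r\'ealiser tout jet prescrit $(u_i)_i$ par une fonction m\'eromorphe globale $g$ sur $C'$ v\'erifiant $g\equiv u_i\pmod{\mathfrak{m}_{p_i}^{n_i}}$. J'invoquerais le th\'eor\`eme d'approximation, \'equivalent ici \`a Riemann--Roch : pour un diviseur auxiliaire $E$ de degr\'e assez grand \`a support disjoint des $p_i$, la restriction $H^0(C',\mathcal{O}(E))\to\bigoplus_i\mathcal{O}_{p_i}/\mathfrak{m}_{p_i}^{n_i}$ est surjective, ce qui produit la fonction $g$ voulue ; son diviseur $(g)$ rel\`eve alors la classe donn\'ee dans $\Ker\pi$.

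Il reste \`a identifier le groupe affine. En un point lisse $\mathcal{O}_{p_i}\cong\mathbb{C}[[t]]$, et
$$\mathcal{O}_{p_i}^{*}/U_{p_i}^{(n_i)}\cong\mathbb{C}^{*}\times\bigl(1+t\,\mathbb{C}[t]/t^{n_i}\bigr)\cong\mathbb{C}^{*}\times\mathbb{C}^{n_i-1},$$
le facteur unipotent devenant additif via le logarithme tronqu\'e, ce qui fournit un isomorphisme de groupes alg\'ebriques sur $\mathbb{C}$. Le produit sur $i$ donne $(\mathbb{C}^{*})^{k}\times\mathbb{C}^{\sum_i n_i-k}$, et le quotient par la diagonale $\mathbb{C}^{*}$ supprime exactement un facteur multiplicatif, d'o\`u
$$\Ker\pi\cong(\mathbb{C}^{*})^{k-1}\times\mathbb{C}^{(\sum_{i=1}^{k}n_i-k)}.$$
En recollant ces deux descriptions on obtient la suite exacte annonc\'ee, l'obstacle principal \'etant la surjectivit\'e de $\bar\phi$ via le th\'eor\`eme d'approximation.
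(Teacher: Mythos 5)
Le papier \'enonce cette proposition sans preuve (elle est rappel\'ee de Rosenlicht \cite{RM} et de Serre \cite{Ser}) ; il n'y a donc pas d'argument interne auquel confronter le v\^otre, hormis la suite analogue (\ref{suite}) de la section 3, elle aussi admise. Votre plan est correct et reproduit fid\`element l'argument classique de Serre : la fl\`eche $\Jac(C)\to\Jac(C')$ oublie la $\mathfrak{m}$-\'equivalence au profit de l'\'equivalence lin\'eaire, sa surjectivit\'e vient du lemme de d\'eplacement, et son noyau s'identifie au groupe des jets d'unit\'es $\prod_{i}\mathcal{O}_{p_i}^{*}/(1+\mathfrak{m}_{p_i}^{n_i})$ modulo la diagonale $\mathbb{C}^{*}$, soit $(\mathbb{C}^{*})^{k-1}\times\mathbb{C}^{\sum_i n_i-k}$ apr\`es d\'ecomposition de chaque facteur en partie multiplicative et partie unipotente via le logarithme tronqu\'e. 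Deux points m\'eritent d'\^etre r\'edig\'es avec plus de soin. D'une part, la bonne d\'efinition de $\bar\phi$ sur les classes et non sur un repr\'esentant fix\'e : vous ne traitez que l'ambigu\"it\'e scalaire de $g$ \`a $D$ fix\'e, mais il faut aussi v\'erifier que si $(g)\underset{\mathfrak{m}}{\sim}(g')$ alors les jets de $g$ et de $g'$ co\"incident modulo la diagonale, ce qui suit du fait que $g/g'$ est un multiple scalaire d'une fonction $f$ avec $f-1$ nulle aux ordres prescrits. D'autre part, pour la surjectivit\'e de $\bar\phi$, l'annulation de $H^{1}(C',\mathcal{O}(E-\mathfrak{m}))$ pour $\deg E$ assez grand donne bien la surjectivit\'e de la restriction aux jets, et la fonction $g$ obtenue, \'etant une unit\'e en chaque $p_i$, a un diviseur automatiquement \'etranger \`a $\supp(\mathfrak{m})$, donc d\'efinit un \'el\'ement du noyau ; ce point est seulement esquiss\'e mais l'outil invoqu\'e est le bon. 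Sous ces r\'eserves mineures de r\'edaction, la d\'emonstration est compl\`ete et le d\'ecompte final $(\mathbb{C}^{*})^{k-1}\times\mathbb{C}^{\sum_{i=1}^{k}n_i-k}$ est correct.
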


 \subsection{}
 Une courbe spectrale est une courbe d'équation affine $y^r +s_1(x)y^{r-1} +\cdots+s_r(x)=0$. Les courbes spectrales tiennent leur nom du fait qu'elles représentent le spectre de matrices et cela se traduit de la manière suivante:
 $$ \det(A(x)-Id_ry)=0,$$
 où $A(x)$ est une matrice carrée $r\times r$ de la forme
 $$A(x) = A_dx^d +A_{d-1}x^{d-1} +...+A_0 \text{ où } A_i\in gl(\mathbb{C}).$$
Les courbes hyperelliptiques sont des cas particuliers des courbes spectrales représentant le spectre de matrices carrées $2\times 2$. Nous allons focaliser notre étude aux courbes hyperelliptiques singulières associées aux systèmes de Mumford. Tout l'enjeu de cette article est de tirer profit des systèmes de Mumford pour compléter nos connaissances des jacobiennes généralisées de courbes hyperelliptiques.

\subsection{}
L'espace de phase $M_g$ du système de Mumford d'ordre $g$ avec $g\in \mathbb{N}^*$ est l'ensemble des matrices polynomiales carrées $2\times 2$ à trace nulle de la forme suivante: 
\begin{align}A(x) = A_{g+1}x^{g+1} +A_{g}x^{g} +...+A_0 \text{ avec } A_i\in gl_2(\mathbb{C}),\end{align}
 où $ A_{g+1}= \left(\begin{array}{cc}0&0\\1&0\end{array}\right)$ , $ A_{g}= \left(\begin{array}{cc}0&1\\w_g&0\end{array}\right)$ avec $w_g\in \mathbb{C}$.
 
Soit $A(x)=\left(\begin{array}{cc}v(x)&u(x)\\w(x)&-v(x)\end{array}\right)$ de $M_g$. L'équation de Lax du système de Mumford est la suivante:
\begin{align}
\frac{dA(x)}{dt}&= \left[A(x),-\displaystyle{\frac{A(t)}{x-t}}- \left( \begin{array}{cc}
0 & 0 \\ 
u(t) & 0
\end{array} \right)\right]. \label{c...}
\end{align}
Pour plus de details sur les systèmes de Mumford voir \cite{Mum}.

Soit $h$ un polynôme de degré $2g+1$. Soit $C$ la courbe hyperelliptique d'equation affine $y^2=h(x)$, on note $M_g(h)$ la fibre du système de Mumford associée à $C$.
Pour décrire les fibres des systèmes de Mumford, nous allons utiliser le fait que chaque fibre admet une stratification. La description des fibres revient à décrire les strates des systèmes de Mumford, comme on l'a vu précédemment chaque strate est isomorphe à une strate maximale d'un système de Mumford. Par conséquent notre description se réduit à décrire les strates
maximales, c'est-à-dire les strates $M_{g,g}(h)$ composée des matrices $A(x)=\left(\begin{array}{cc}v(x)&u(x)\\w(x)&-v(x)\end{array}\right)\in M_g(h)$ telles que $\deg(\PGCD(u,v,w))=0$. Rappelons, quand $h$ est sans racine multiple, $M_{g,g}(h)$ coincide avec la fibre $M_g(h)$ et est d'après Mumford un ouvert de la jacobienne de la courbe hyperelliptique lisse $C:y^2=h(x)$.
\subsection{}
Cet article est composé de trois parties. La première partie consiste à rappeler la construction de la jacobienne d'une courbe lisse ainsi que l'élaboration de la jacobienne généralisée d'une courbe singulière. Dans la deuxième partie nous nous restreignons aux courbes hyperelliptiques en rappelant leurs caractéristiques, puis nous définirons la jacobienne généralisée d'une courbe hyperelliptique singulière en calculant explicitement l'entier naturel $\delta$ par le lemme \ref{delta} qui est l'indice central du théorème de Riemann-Roch généralisé. Nous établirons aussi la jacobienne généralisée d'une courbe hyperelliptique singulière comme une extension de groupe à l'aide d'une suite exacte (\ref{suite}).Dans la troisième partie, après un bref rappel des systèmes de Mumford
comme systèmes algébriquement intégrables et de la stratifications des fibres de l'application
moment d'un système de Mumford. Comme chaque strate fine
d'une fibre singulière de $M_{g}$ est isomorphe 
à une strate maximale d'une fibre de l'espace de phase $M_{g'}$, où $g'<g$. Le reste de cette article sera dédié à établir toutes les caractéristiques des matrices composant une strate maximale dont la caractéristique la plus considerable est exprimée dans le lemme \ref{Deg}. Ceci nous permettra de construire 
 l'application injective $\Phi$ introduite dans la definition \ref{phi} qui liera la strate maximale $M_{g,g}(h)$ avec la jacobienne généralisée de la courbe hyperelliptique $C$ d'équation $y^2=h(x)$, puis nous nous emploierons à démontrer que $\Phi$ est une application bien définie et qu'elle est le bon choix pour définir un isomorphisme entre $M_{g,g}(h)$ et un ouvert de la jacobienne généralisée $\Jac_{\mathfrak{m}}(C')$, qui sera établi par la proposition \ref{fin}.

\section{Rappels}\label{Al}
Dans cette section, nous rappelons brièvement la notion de diviseur sur une courbe algébrique et nous énonçons la
définition et les propriétés fondamentales des courbes hyperelliptiques et nous rappelons la construction de la
jacobienne généralisée ( voir les details et informations complémentaires dans 
 \cite{GH} et \cite{Ser} ).

\bigskip

Soit $ {C}$ une courbe algébrique projective. Le \emph{groupe des diviseurs} de $ {C}$ est le groupe abélien
libre, engendré par les points de $ {C}$. Il est noté $\Div( {C})$ et ses éléments sont appelés
\emph{diviseurs} (de~$
{C}$). Tout diviseur $D$ de $ 
{C}$ s'écrit comme
$$
D=\sum\limits_{p\in 
{C}} n_p p, \text{ où les } n_p \text{ sont des entiers presque tous nuls}.
$$
Le \emph{support} de $D$, noté $\supp(D)$, est l'ensemble de points $p\in 
{C}$ tel que $n_p\neq 0$. Le
\emph{degré} de~$D$ est l'entier $\sum\limits_{p\in 
{C}} n_p $, noté $\deg(D)$. La fonction $\deg:\Div(
C)\to \mathbb Z$ est un morphisme de groupes, dont le noyau est noté $\Div^0( {C})$. Soit $D=\sum\limits_{p\in {C}} n_p p$ un diviseur de $%
{C} $ et $p$ un point de $ {C}$ on appelle
$n_p$ la \emph{multiplicité de $D$ en $p$}, noté $D\vert_{p}$.

\smallskip

Le groupe des diviseurs $\Div(
{C})$ est naturellement muni d'une relation d'ordre partiel. Soient
$D=\sum\limits_{p\in {C}} n_p p$ et $D'=\sum\limits_{p\in 
{C}} n'_p p$ deux diviseurs de $ {C}$. On dit que
$D\geqslant D'$ si $n_p \geqslant n'_p$ pour tout $p\in {C}$. Un diviseur $D \in \Div( {C})$ est dit
\emph{effectif} si $D \geqslant 0$. L'ensemble des diviseurs effectifs est noté $\Div^+( {C})$. Les diviseurs
effectifs de $ {C}$ de degré $n$ sont en bijection avec les éléments de $ {C}^{(n)}$, où $ {C}^{(n)}$ est le
produit symétrique $n$-ième de la courbe $ {C}$. Par abus de language, on appelle les éléments de $ {C}^{(n)}$
aussi des diviseurs de $C$.

\smallskip

Supposons maintenant que $ {C}$ soit une courbe non-singulière. Une fonction rationnelle $f$ sur $ {C}$
définit un diviseur, noté $(f)$, où
$$
 (f) = \sum\limits_{p\in {C}} v_p (f)p,
$$
avec $v_p$ la valuation de la fonction $f$ au point lisse $p$. Un tel diviseur est appelé diviseur
\emph{principal}. Les diviseurs principaux de $ {C}$ forment un sous-groupe de $\Div^0( {C})$, noté
$\Princ( {C})$. deux diviseurss $D$ et $D'$ de $ {C}$ sont dit \emph{linéairement équivalents} si leur
différence est un diviseur principal. On écrira alors $D\sim D'$ et on note $[D]$ la classe d'équivalence de
$D$. Les diviseurs principaux de $ {C}$ sont donc les diviseurs $D$ avec $D\sim 0$. La relation d'équivalence
linéaire est compatible avec la structure de groupe de $\Div( {C})$: si $D\sim D'$ alors $D+D''\sim D'+D''$ pour
tout $D''\in\Div( {C})$. Par conséquent, le quotient $ \Div( {C})/ \Princ( {C})$ est un groupe qu'on appelle
le \emph{groupe de Picard}, noté $\Pic( {C})$. Puisque le degré de tout diviseur principal est zéro, le morphisme
$\deg:\Div( {C})\to\mathbb{Z}$ induit un morphisme $\deg: \Pic(C) \longrightarrow \mathbb{Z}$. Son noyau est
constitué de classe d'équivalence linéaire de tous les diviseurs de degré zéro de $ {C}$ est un groupe abélien,
appelé la \emph{jacobienne} de la courbe $ {C}$, notée $\Jac( {C})$. C'est une variété abélienne de dimension
$g$ qui est très utile dans l'étude de la courbe lisse $ {C}$. Plus loin, nous considèrerons les jacobiennes
généralisées, qui généralisent la notion de jacobienne pour des courbes singulières, mais au préalable donnons une autre definition équivalente à cette dernière definition de la jacobienne d'une courbe lisse $ {C}$. 
\smallskip
\begin{defi}\upshape

Un faisceau de droites $L$ sur une surface de Riemann $C$, est une variété complexe de dimension deux, munie d'un holomorphisme $\pi: L \longrightarrow C$ tel que:

\begin{itemize}
\item L'image inverse par $\pi$ d'un point $p\in C$ a une structure d'espace vectoriel de dimension 1 noté $L_p$,
\item Pour tout $p\in C$, il existe un ouvert $U$ contenant $p$ est un isomorphisme $\phi_U: \pi^{-1}(U) \overset{\sim} {\longrightarrow}U\times \mathbb{C}$,
\begin{equation}
\begin{tikzcd}
 \pi^{-1}(U) \arrow{rr}{\overset{\phi_U}{\sim}} \arrow[swap]{dr}{\pi} & & U\times \mathbb{C} \arrow{dl}{ } \\[10pt]
 & U 
\end{tikzcd}
\end{equation}
\item Soit $U$ et $V$ deux ouverts, le morphisme $\phi_V\circ\phi_U^{-1}$ est de la forme \begin{equation}\label{tra}\begin{array}{ccc}
U\times \mathbb{C}& \longrightarrow &V\times \mathbb{C}\\
(p,x)& \longrightarrow &(p,f(p)x)
\end{array}\end{equation} avec $f$ une fonction holomorphe non-nul.
\end{itemize}
\end{defi}
\begin{nota}
Le holomorphisme $f$ défini dans (\ref{tra}) est noté $g_{UV}$ et est appelé fonction de transition des fonctions de trivializations locales $\phi_U$ et $\phi_V$.
\end{nota}
\begin{defi}\upshape

Une section holomorphe d'un faisceau de droites $L$ d'une surface de Riemann $C$ est une fonction holomorphe $s:C \longrightarrow L$ telle que $\pi\circ s=id$.
\end{defi}
Le relation entre une section $s$ et une fonction trivialization locale $\phi_U$ est la suivante 
\begin{equation}\begin{array}{cccc}
\phi_U(s):&U & \longrightarrow &U\times \mathbb{C}\\
&p& \longrightarrow &(p,s_U(p))
\end{array}\end{equation}
Où $s_U$ est une fonction holomorphique définie sur l'ouvert $U$. Soit $V$ un autre ouvert de $C$, on a sur l'intersection $U\cap V$
$$s_U=g_{UV}s_V.$$ 
\begin{rem}
Soit $s$ et $t$ deux sections d'un faisceau de droites $L$, et $U,V$ deux ouverts de $C$, on a sur l'ouvert $U\cap V$
\begin{equation} s_U=g_{UV}s_V 
\quad\text{et}\quad
t_U=g_{UV}t_V;\end{equation}
ce qui implique que sur $U\cap V$ on ait 
\begin{equation} \label{frac} \frac{s_U}{t_U}= \frac{s_V}{t_V};\end{equation}
En regroupant toutes les fractions de la forme (\ref{frac}), on reconstruit une fonction méromorphe sur toute la courbe $C$.\\
\end{rem}
\begin{nota}
Pour tous $s$ et $t$ deux sections d'un faisceau de droites $L$ et pour tout point $p$ de $C$, on note
\begin{equation}\label{es} (s+t)(m):=s(m)+t(m) 
\quad\text{et}\quad
(\lambda s)(m)=\lambda s(m).\end{equation}
avec $\lambda\in \mathbb{C}.$\\
L'ensemble des sections d'un faisceau de droites $L$ muni des relations (\ref{es}) est un espace vectoriel noté $H^0(C,L)$.
\end{nota}
\begin{theo}
Soit $L$ un faisceau de droites de $C$, l'espace $H^0(C,L)$ est de dimension finie.
\end{theo}
L'exemple suivant nous permettra d'apercevoir la relation entre courbe et faisceau en droite
\begin{exemple}
Soit l'espace projectif $\mathbb{P}^1$ muni de sa carte usuelle $(U_0,z)$ et 
$(U_1,{z}_1)$. On note par $\mathcal{O}(n)$ le faisceau en droite doté de la fonction de transition $g_{01}(z)=z^n$ sur $U_0\cap U_1=\mathbb{C}^*$. Une section $(s_0,s_1)$ de $H^0(\mathbb{P}^1,\mathcal{O}(n))$ doit satisfaire la condition suivante 
\begin{equation} \label{eq=}
s_0(z)=z^ns_1(z_1),
\end{equation}
sur l'intersection $U_0\cap U_1$.
Rappelons que les sections $(s_0,s_1)$ sont holomophes de forme:
$
s_0(z) = \sum\limits_{i=0}^\infty a_iz^i, \quad s_1(z_1) = \sum\limits_{i=0}^\infty b_iz_1^i,
$
en outre sur l'ouvert $\mathbb{C}^*$ on a $z_1=\frac{1}{z}$ alors l'equation (\ref{eq=}) devient 
\begin{equation}\label{eqbis=}
\sum\limits_{i=0}^\infty a_iz^i =z^n\sum\limits_{i=0}^\infty b_iz^{-i} \quad z\in \mathbb{C}^*,
\end{equation}
On a abouti au fait que la section $s(z)= \sum\limits_{i=0}^n a_iz^i$, car en identifiant, les coefficients des deux parties de l'equation (\ref{eqbis=}) on a $a_i=b_i=0$ pour $i>n$ et $a_i=b_{n-i}$ pour $i\leqslant n$. On conclut que toute section de section de $H^0(\mathbb{P}^1,\mathcal{O}(n))$ doit être un polynôme d'ordre au plus $n$, de ce fait $\dim(H^0(\mathbb{P}^1,\mathcal{O}(n)))\leqslant n+1$.

\end{exemple}
\bigskip
Soit $C$ une surface de Riemann et soit la suite exacte suivante
\begin{equation}\label{s.e}0 \longrightarrow \mathbb{Z} \overset{\exp(2\pi \star)}{\longrightarrow} \mathcal{O} \longrightarrow \mathcal{O}^*\longrightarrow1\end{equation}
où $ \mathbb{Z}$ est le faisceau des fonctions constantes sur $C$ à valeur dans $\mathbb{Z}$, $ \mathcal{O}$ est le faisceau des fonctions holomorphiques sur $C$ et $ \mathcal{O}^*$ est le faisceau des fonctions holomorphiques non-nulles sur $C$.\\
À l'aide de la cohomologie de groupe la suite exacte (\ref{s.e}) nous induit une suite exacte longue suivante: 
\begin{align}\label{s.e.l} \begin{split}
&0 \longrightarrow \mathbb{Z} {\longrightarrow} \mathbb{C} \longrightarrow \mathbb{C}^*
\longrightarrow \\
 &H^1(C,\mathbb{Z})\longrightarrow H^1(C,\mathcal{O})\longrightarrow H^1(C,\mathcal{O}^*)\\
\longrightarrow &H^2(C,\mathbb{Z})\longrightarrow H^2(C,\mathcal{O})\longrightarrow H^2(C,\mathcal{O}^*)\longrightarrow...
\end{split}
\end{align}
Cette longue suite s'arrête car $C$ est une courbe compacte on a $H^2(C,\mathbb{C})=0$. \\
De plus les flèches suivantes ont les particularités suivantes:
\begin{align}
 \mathbb{C} \overset{surjectif}{\longrightarrow} \mathbb{C}^*
\overset{\text{application } 0}{\longrightarrow} H^1(C,\mathbb{Z})\overset{injectif}{\longrightarrow} H^1(C,\mathcal{O})\longrightarrow H^1(C,\mathcal{O}^*)\longrightarrow H^2(C,\mathbb{Z}) \longrightarrow 0.
\end{align}
Alors la suite (\ref{s.e.l}) devient 
\begin{align}\label{s.e.e}
{0}{\longrightarrow} \frac{ H^1(C,\mathcal{O})}{H^1(C,\mathbb{Z})}\longrightarrow H^1(C,\mathcal{O}^*)\longrightarrow H^2(C,\mathbb{Z}) \longrightarrow 0.
\end{align}
En outre, le groupe $H^2(C,\mathbb{Z}) $ est isomorphe au groupe $\mathbb{Z}$, car $C$ est une surfaces de Riemann compactes.
\begin{nota}
L'application $H^1(C,\mathcal{O}^*)\longrightarrow H^2(C,\mathbb{Z})$ de suite exacte (\ref{s.e.e}) est notée $c_1$. Soit $L$ un faisceau de droite de $H^1(C,\mathcal{O}^*)$, on appelle $c_1(L)$ la première classe de Chern de $L$ et peut aussi être notée $deg(L)$.
\end{nota}
\begin{defi}\upshape

Soit $C$ une surface de Riemann lisse, le groupe $ \frac{ H^1(C,\mathcal{O})}{H^1(C,\mathbb{Z})}$ est appelé la \emph{jacobienne} de la surface de Riemann $C$.
\end{defi}
\subsection{Jacobiennes généralisées}\label{par:jac_gen}
Les jacobiennes généralisées sont des groupes algébriques non-compacts. Nous construirons les jacobiennes généralisées en s'appuyant sur les travaux de Serre \cite{Ser} , puis nous restreindrons notre étude aux courbes hyperelliptiques singulières. \\

Commençons par établir les notations qui seront utilisées dans cette section:
\begin{nota}\label{notat}
Soit $C$ une courbe projective singulière, avec $S$ l'ensemble de ses points
singuliers. On note ${C'}$ la normalisée de courbe projective $C$, la courbe ${C'}$ est lisse de genre $g'$. Soit $\mathfrak{n}$ un diviseur
effectif de ${C'}$, appelé \emph{module} (de $ {C'}$) tel qu'il existe une projection $\phi$ entre la courbe $C'$ et la courbe $C$ où
\begin{align}\label{sin}\phi:C'-\supp{\mathfrak{n}}\longrightarrow C-S\end{align} est un isomorphisme birégulier.
\end{nota}
On note aussi par $\phi$ le prolongement de l'application (\ref{sin})
\begin{align}\label{sin}\phi:C'\longrightarrow C\end{align}

\begin{defi}\upshape

Deux diviseurs $D$ et $D'$ de ${C'}$ sont dits $\mathfrak{n}$-\emph{équivalents}, noté
 $D\underset{\mathfrak{n}}{\sim}D'$, s'il existe une fonction rationnelle $f$ de $C$ telle que $(f-1) \vert_p
 \geqslant \mathfrak{n} \vert_p$ pour tout $p\in \supp(\mathfrak{n})$ et $D-D'=(f)$.
\end{defi}
\begin{rem}
Deux diviseurs $D$ et $D'$ de ${C'}$ sont
 $D\underset{\mathfrak{n}}{\sim}D'$, induit que le diviseur $D-D'$ est de support étranger à $\supp(\mathfrak{n})$.
 \end{rem}
\begin{nota}
L'ensemble des diviseurs ${C'}$ étrangers à $ \supp(\mathfrak{n})$, sera noté $\Div_{\mathfrak{n}}(C')$.
\end{nota}
\begin{defi}\upshape

 Soit $D$ un diviseur de $\Div_{\mathfrak{n}}(C')$. L'espace vectoriel $L_{\mathfrak{n}}(D)$ est composé des
 fonctions $f$ telles que $(f)\geqslant -D$ et $f$ admet une même valeur $c\in \mathbb{C}$ sur tous les points du
 $\supp(\mathfrak{n})$ avec $(f-c)|_{p}\geqslant \mathfrak{n} |_{p}$ pour tout $p\in \supp{\mathfrak{n}}$. La dimension de $L_{\mathfrak{n}}(D)$ est
 notée $l_{\mathfrak{n}}(D)$. L'espace vectoriel $I_\mathfrak{n}(D)$ est l'ensemble des formes $\omega$ telles que $(\omega)
 \geqslant D-\mathfrak{n} $. La dimension de $I_{\mathfrak{n}}(D)$ est notée $i_{\mathfrak{n}}(D)$.
\end{defi}

\begin{nota}
Pour tout point $p$ de $C$ (resp. tout point $q$ de $C'$), on note par $\mathcal{O}_p$ (resp. $\mathcal{O}'_q$) l'anneau local de $C$ (resp. $C'$).
\end{nota}
Sous les mêmes transcriptions vues dans la notation \ref{notat}, on illustre le poids d'un point singulier de la manière suivante:

 Soit $Q\in S$ \\
 $\bullet$ Si $\phi^{-1}(Q)=Q$ alors $\delta_Q= \dim(\mathcal{O}'_Q/\mathcal{O}_Q)$,\\
 $\bullet$ Si $\phi^{-1}(Q)=\{Q_1,\cdots,Q_L\}$ alors $\delta_Q= \dim(\mathcal{O}'_{Q_1}/\bigcap\limits_{1\leqslant i\leqslant L}
 \mathcal{O}'_{Q_i}).$

Le théorème classique de Riemann-Roch se généralise de la façon suivante:
\begin{theo}[Riemann-Roch]\label{thm:RR}
 Soient ${C'}$ une courbe projective lisse de genre $g'$ et soit $\mathfrak{n}$ un diviseur effectif de ${C'}$. Si
 $D$ est un diviseur de ${C'}$, étranger à $\supp(\mathfrak{n})$, alors
 $$l_{\mathfrak{n}}(D)-i_{\mathfrak{n}}(D)=\deg(D)+1-\pi\,.$$
 où
 $$\left\{ \begin{array}{lc} \pi= g' & \text{ si }
 \mathfrak{n}=0\;, \\ \pi= g' +\delta & \text{ si } \mathfrak{n}\neq 0\;, \\ \end{array} \right.$$ où
 $\delta=\sum\limits_{Q\in S} \delta_Q$ quand $\mathfrak{n}$ est un diviseur générique. Rappelons la definition de $\delta_Q$:
 Soit $Q\in S$ et soit l'application $\phi:C'\longrightarrow C$ définie plus haut (\ref{sin}):\\
 $\bullet$ Si $\phi^{-1}(Q)=Q$ alors $\delta_Q= \dim(\mathcal{O}'_Q/\mathcal{O}_Q)$,\\
 $\bullet$ Si $\phi^{-1}(Q)=\{Q_1,\cdots,Q_L\}$ alors $\delta_Q= \dim(\mathcal{O}'_{Q_1}/\bigcap\limits_{1\leqslant i\leqslant L}
 \mathcal{O}'_{Q_i}).$
\end{theo}
%
%
%

\smallskip

Ci-dessous nous énonçons quelques résultats qui découlent du théorème Riemann-Roch généralisé qui nous permettront de construire la jacobienne généralisée (voir \cite{Ser}):
\begin{lem}\label{RRL}
 Soient ${C'}$ une courbe projective lisse de genre $g'$ et soit $\mathfrak{n}$ un module de ${C'}$. Soit $D$ un
 diviseur de ${C'}$ étranger à $\supp(\mathfrak{n})$ et soit $p$ un point générique de ${C'}$. Si
 $i_{\mathfrak{n}}(D)\geqslant1$, alors 
 \begin{equation*} 
 i_{\mathfrak{n}}(D+p)=i_{\mathfrak{n}}(D)-1\;.
 \end{equation*}%
\end{lem}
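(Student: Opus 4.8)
Le plan est d'identifier $I_{\mathfrak{n}}(D+p)$ comme un sous-espace de codimension au plus $1$ dans $I_{\mathfrak{n}}(D)$, puis de montrer que cette codimension vaut exactement $1$ d\`es que $p$ est g\'en\'erique et $i_{\mathfrak{n}}(D)\geqslant 1$. Comme $p$ est g\'en\'erique, je le choisis d'embl\'ee hors de $\supp(D)\cup\supp(\mathfrak{n})$, de sorte que $D|_p=\mathfrak{n}|_p=0$ ; la contrainte modulaire port\'ee par $\mathfrak{n}$ n'intervient alors pas au voisinage de $p$, et l'argument se ram\`ene essentiellement au cas classique de Riemann-Roch.

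Je consid\`ere ensuite l'application d'\'evaluation en $p$. Toute forme $\omega\in I_{\mathfrak{n}}(D)$ v\'erifie $v_p(\omega)\geqslant (D-\mathfrak{n})|_p=0$, donc est holomorphe en $p$ ; dans une coordonn\'ee locale $t$ centr\'ee en $p$ on \'ecrit $\omega=(a_0+a_1t+\cdots)\,dt$ et je pose $\mathrm{ev}_p(\omega)=a_0$. Cette application $\mathrm{ev}_p:I_{\mathfrak{n}}(D)\longrightarrow\mathbb{C}$ est $\mathbb{C}$-lin\'eaire et son noyau ne d\'epend pas du choix de $t$. Par d\'efinition, $I_{\mathfrak{n}}(D+p)$ est form\'e des $\omega$ avec $(\omega)\geqslant D+p-\mathfrak{n}$ ; au point $p$ cette condition s'\'ecrit $v_p(\omega)\geqslant 1$, tandis qu'en tout autre point elle co\"incide avec celle d\'efinissant $I_{\mathfrak{n}}(D)$. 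Ainsi $I_{\mathfrak{n}}(D+p)=\Ker(\mathrm{ev}_p)$, d'o\`u
$$i_{\mathfrak{n}}(D)-i_{\mathfrak{n}}(D+p)=\dim\bigl(\im(\mathrm{ev}_p)\bigr)\in\{0,1\}.$$

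Il reste \`a \'etablir la surjectivit\'e de $\mathrm{ev}_p$. Puisque $i_{\mathfrak{n}}(D)\geqslant 1$, je fixe une forme non nulle $\omega_0\in I_{\mathfrak{n}}(D)$. Son diviseur $(\omega_0)$ est fix\'e et ne poss\`ede qu'un nombre fini de z\'eros ; en imposant \`a $p$, toujours g\'en\'erique, d'\'eviter ce nombre fini de points, on obtient $v_p(\omega_0)=0$, c'est-\`a-dire $\mathrm{ev}_p(\omega_0)\neq 0$. L'application $\mathrm{ev}_p$ est donc non nulle, partant surjective sur $\mathbb{C}$ ; sa codimension vaut $1$ et l'on conclut $i_{\mathfrak{n}}(D+p)=i_{\mathfrak{n}}(D)-1$.

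Le point d\'elicat est uniquement de nature qualitative : il faut justifier que le caract\`ere g\'en\'erique de $p$ autorise \`a l'\'ecarter \emph{simultan\'ement} des supports de $D$ et de $\mathfrak{n}$ (pour que $\mathrm{ev}_p$ soit d\'efinie et que le module $\mathfrak{n}$ n'alt\`ere pas le comportement local) et de l'ensemble fini des z\'eros de $\omega_0$ (pour garantir la surjectivit\'e). Ces exclusions ne concernant qu'un nombre fini de points d'une courbe, elles sont compatibles avec la g\'en\'ericit\'e ; aucune difficult\'e calculatoire n'est \`a pr\'evoir, la seule subtilit\'e \'etant de v\'erifier que l'aspect modulaire se neutralise hors de $\supp(\mathfrak{n})$.
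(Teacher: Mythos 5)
Votre preuve est correcte. L'article n'en donne aucune : le lemme y est simplement rappel\'e avec renvoi \`a Serre, et votre argument --- identifier $I_{\mathfrak{n}}(D+p)$ au noyau de l'\'evaluation $\mathrm{ev}_p$ sur $I_{\mathfrak{n}}(D)$, puis obtenir la surjectivit\'e de $\mathrm{ev}_p$ en choisissant $p$ g\'en\'erique hors de $\supp(D)\cup\supp(\mathfrak{n})$ et de l'ensemble fini des z\'eros d'une forme non nulle $\omega_0$ --- est pr\'ecis\'ement la d\'emonstration standard de ce r\'esultat, tous les points d\'elicats (ind\'ependance du noyau vis-\`a-vis de la coordonn\'ee locale, compatibilit\'e des exclusions finies avec la g\'en\'ericit\'e, neutralisation de la condition modulaire en $p$ puisque $\mathfrak{n}|_p=0$) \'etant correctement trait\'es.
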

Une application répétée de ce lemme prouve le résultat suivant:
\begin{lem}\label{RRL3}
 Soit ${C'}$ une courbe projective lisse de genre $g'$ et soit $\mathfrak{n}$ un module de ${C'}$. Soit $D$ un
 diviseur de degré zéro de ${C'}$ étranger à $\supp(\mathfrak{n})$ et soient $M_1,\dots, M_\pi$ des points génériques de ${C'}$. Alors il
 existe un unique diviseur effectif $D'$ de ${C'}$ tel que
 \begin{equation*}
 D'\underset{\mathfrak{n}}{\sim} D+\sum_{i=1}^\pi M_i\;.
 \end{equation*}%
\end{lem}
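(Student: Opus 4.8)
The plan is to translate the existence and uniqueness of $D'$ into a statement about the linear system $L_{\mathfrak{n}}(E)$, where $E := D + \sum_{i=1}^\pi M_i$. Since $\deg D = 0$ we have $\deg E = \pi$, and since each generic $M_i$ may be taken outside $\supp(\mathfrak{n})$, the divisor $E$ is coprime to $\supp(\mathfrak{n})$. First I would record the following dictionary: an effective divisor $D'$ coprime to $\supp(\mathfrak{n})$ satisfies $D' \underset{\mathfrak{n}}{\sim} E$ if and only if $D' = (f) + E$ for some rational function $f$ with $(f) \geq -E$ taking the common value $1$ on $\supp(\mathfrak{n})$ and with $(f-1)|_p \geq \mathfrak{n}|_p$ there; such an $f$ is exactly an element of $L_{\mathfrak{n}}(E)$ whose value $c$ on $\supp(\mathfrak{n})$ equals $1$. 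Writing $c \colon L_{\mathfrak{n}}(E) \to \mathbb{C}$ for the linear form sending $f$ to its common value on $\supp(\mathfrak{n})$, the lemma reduces to showing that $c$ is an isomorphism: surjectivity produces $f_0$ with $c(f_0)=1$ and hence $D' := (f_0)+E$, while injectivity forces this $D'$ to be unique; moreover, since $f_0$ is a unit at each point of $\supp(\mathfrak{n})$, the divisor $D'$ is automatically effective and coprime to $\supp(\mathfrak{n})$.

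Next I would compute $\dim L_{\mathfrak{n}}(E) = 1$. The index $i_{\mathfrak{n}}$ is monotone non-increasing under the addition of points, and the generalized Riemann--Roch theorem \ref{thm:RR} applied to $D$ gives $i_{\mathfrak{n}}(D) = l_{\mathfrak{n}}(D) + \pi - 1$; since $L_{\mathfrak{n}}(D) \subseteq L(D)$ and $\deg D = 0$ force $l_{\mathfrak{n}}(D) \leq 1$, we get $i_{\mathfrak{n}}(D) \leq \pi$. Feeding the generic points $M_1, \dots, M_\pi$ one at a time into Lemma \ref{RRL}, the index drops by exactly $1$ at each step while it is positive and then stays at $0$, so after $\pi$ steps $i_{\mathfrak{n}}(E) = 0$. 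Substituting $\deg E = \pi$ and $i_{\mathfrak{n}}(E) = 0$ back into \ref{thm:RR} yields $l_{\mathfrak{n}}(E) = \deg E + 1 - \pi = 1$, so $L_{\mathfrak{n}}(E)$ is a line.

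It then remains to prove that $c$ is injective. A function $f \in \Ker(c)$ vanishes to order $\geq \mathfrak{n}|_p$ at every $p \in \supp(\mathfrak{n})$ and satisfies $(f) \geq -E$, so in the ordinary sense $\Ker(c) = L(E - \mathfrak{n})$. Here the structure of the generalized Jacobian enters: the local computation at the singular point gives $\delta = \deg(\mathfrak{n}) - 1$ (in accordance with the group extension recorded in the introduction), whence $\pi = g' + \delta$ and $\deg(E - \mathfrak{n}) = \pi - \deg(\mathfrak{n}) = g' - 1$. A class of degree $g'-1$ carries a nonzero linear system only when it lies in the theta divisor $W_{g'-1} \subsetneq \Pic^{g'-1}(C')$; since $(M_1,\dots,M_\pi) \mapsto [E-\mathfrak{n}]$ is dominant onto $\Pic^{g'-1}(C')$ (already $\pi \geq g'$ points surject under Abel--Jacobi), a generic choice of the $M_i$ keeps $[E-\mathfrak{n}]$ off $W_{g'-1}$ and gives $L(E-\mathfrak{n}) = 0$. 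Thus $c$ is a nonzero linear form on a line, hence an isomorphism, which finishes the proof.

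The main obstacle I anticipate is the interplay of the two distinct genericity requirements hidden in \emph{points g\'en\'eriques}: one set of conditions makes each application of Lemma \ref{RRL} drop the index (forcing $i_{\mathfrak{n}}(E)=0$), while a separate condition keeps $[E-\mathfrak{n}]$ outside the theta divisor (forcing $c$ injective). Both are nonempty open conditions and can be imposed together, but the whole argument rests on the degree identity $\deg(E-\mathfrak{n}) = g'-1$, and hence on pinning down $\delta = \deg(\mathfrak{n}) - 1$; establishing that equality exactly, with no off-by-one, is the delicate point.
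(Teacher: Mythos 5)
Your argument is correct, and it is essentially a fully written-out version of what the paper leaves implicit: the paper's entire justification for Lemma \ref{RRL3} is the sentence introducing it (``une application r\'ep\'et\'ee de ce lemme''), i.e.\ iterate Lemma \ref{RRL}. Your first two steps --- adding the generic $M_i$ one at a time to drive $i_{\mathfrak{n}}$ to $0$ (using the bound $i_{\mathfrak{n}}(D)\leqslant\pi$ and the monotonicity of $i_{\mathfrak{n}}$), then reading $l_{\mathfrak{n}}(E)=1$ off Theorem \ref{thm:RR} --- are exactly that iteration done carefully. What you add, and what the paper's one-line justification silently skips, is the non-degeneracy of the evaluation functional $c$ on the line $L_{\mathfrak{n}}(E)$: knowing $l_{\mathfrak{n}}(E)=1$ does not by itself produce a representative with $c=1$, since the generator could a priori lie in $\Ker(c)=L(E-\mathfrak{n})$, and your theta-divisor argument is the right way to exclude this. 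One remark on the step you flag as delicate: you do not need the equality $\delta=\deg(\mathfrak{n})-1$, only the inequality $\delta\leqslant\deg(\mathfrak{n})-1$, which holds for any module because the local ring of the singular curve at each singular point contains the minimal subring $\mathbb{C}+I_s$ of the introduction; the inequality already gives $\deg(E-\mathfrak{n})\leqslant g'-1$, and a generic class of degree at most $g'-1$ has no sections, which is all you use. This is worth insisting on, because the equality you invoke (consistent with the exact sequence of the introduction and with Serre's convention, where all of $\supp(\mathfrak{n})$ is identified to a single point) is in tension with the paper's own Lemma \ref{delta}, which gives $\delta=n=\deg(\mathfrak{m})/2$ in the hyperelliptic case; phrasing your last step with the inequality makes the proof immune to that ambiguity.
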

Lemme \ref{RRL3} 
implique que si $M$ et $N$ sont
des points génériques de ${C'}^{(\pi)}$, alors le lemme implique qu'il existe un unique élément $R$ de~${C'}^{(\pi)}$, satisfaisant
\begin{align}\label{P_{0}}
R\underset{\mathfrak{n}}{\sim}& M+N-\pi P_0,
\end{align}
où $P_0$ est un point arbitraire de $
{C'}-\supp(\mathfrak{n})$.

\begin{defi}\upshape

Soit la loi de composition $\star$ définie de la manière suivante:
 \begin{equation}\begin{array}{cccl}
\star:&{C'}^{(\pi)}\times {C'}^{(\pi)}& \longrightarrow &{C'}^{(\pi)}\\
&(M,N)& \longrightarrow &M\star N=R
\end{array}\end{equation}
où
 \begin{align}\label{P^{1}}
R\underset{\mathfrak{n}}{\sim}& M+N-\pi P_0,
\end{align}
avec $M$ et $N$ 
des points génériques de ${C'}^{(\pi)}$ et $P_0$ est un point arbitraire de $
{C'}-\supp(\mathfrak{n})$.
\end{defi}

\begin{prop}
 La variété ${C'}^{(\pi)}$, munie de la loi de composition $\star$ est un groupe birationnel.
\end{prop}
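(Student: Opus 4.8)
Le plan est de v\'erifier que $\star$ satisfait les axiomes d'une loi de groupe birationnelle au sens de Weil : $\star$ doit \^etre une application rationnelle dominante, g\'en\'eriquement associative, et telle que les translations partielles $(M,N)\mapsto(M,M\star N)$ et $(M,N)\mapsto(M\star N,N)$ soient birationnelles (c'est ce dernier fait qui, via le th\'eor\`eme de Weil, permettra ensuite de r\'egulariser $\star$ en la structure de groupe alg\'ebrique de la jacobienne g\'en\'eralis\'ee). D'abord, je montrerais que $\star$ est bien une application rationnelle d\'efinie sur un ouvert dense $U$ de ${C'}^{(\pi)}\times {C'}^{(\pi)}$. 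Pour $(M,N)$ g\'en\'erique, le diviseur $M+N-\pi P_0$ est \'etranger \`a $\supp(\mathfrak{n})$ et, en appliquant le lemme \ref{RRL3} avec $D=M-\pi P_0$ (de degr\'e z\'ero) et les $\pi$ points g\'en\'eriques fournis par $N$, il existe un unique diviseur effectif $R=M\star N$ de degr\'e $\pi$ tel que $R\underset{\mathfrak{n}}{\sim}M+N-\pi P_0$. Le comptage du th\'eor\`eme \ref{thm:RR} donne $l_{\mathfrak{n}}(M+N-\pi P_0)-i_{\mathfrak{n}}(M+N-\pi P_0)=1$, et le lemme \ref{RRL} assure que, pour un choix g\'en\'erique, $i_{\mathfrak{n}}=0$, donc $l_{\mathfrak{n}}=1$ : le diviseur $R$ est alors coup\'e par l'unique section (\`a scalaire pr\`es) de l'espace $L_{\mathfrak{n}}$ correspondant, section dont les coefficients d\'ependent alg\'ebriquement des coordonn\'ees de $(M,N)$, ce qui fait de $\star$ un morphisme sur $U$.

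Ensuite, j'\'etablirais l'associativit\'e g\'en\'erique par un simple calcul dans $\Div_{\mathfrak{n}}(C')$. Par d\'efinition de $\star$ et par compatibilit\'e de $\underset{\mathfrak{n}}{\sim}$ avec l'addition des diviseurs \'etrangers \`a $\supp(\mathfrak{n})$, on a $(M\star N)\star L\underset{\mathfrak{n}}{\sim}(M\star N)+L-\pi P_0\underset{\mathfrak{n}}{\sim}M+N+L-2\pi P_0$, et de m\^eme $M\star(N\star L)\underset{\mathfrak{n}}{\sim}M+N+L-2\pi P_0$. Les deux membres sont donc des diviseurs effectifs de degr\'e $\pi$ qui sont $\mathfrak{n}$-\'equivalents ; l'unicit\'e du lemme \ref{RRL3} force leur \'egalit\'e sur un ouvert dense de ${C'}^{(\pi)}\times{C'}^{(\pi)}\times{C'}^{(\pi)}$. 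On remarquera au passage que $\star$ est commutative, puisque $M\star N$ et $N\star M$ sont tous deux caract\'eris\'es par la relation $\underset{\mathfrak{n}}{\sim}M+N-\pi P_0$ ; cette sym\'etrie all\`egera l'\'etude des translations.

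Enfin, je montrerais que les translations partielles sont birationnelles, ce qui est le c\oe ur de l'\'enonc\'e. Consid\'erons l'application rationnelle $\alpha:(M,N)\mapsto(M,M\star N)$. Pour construire son inverse, on cherche, \`a $(M,R)$ g\'en\'erique fix\'e, l'unique diviseur effectif $N$ de degr\'e $\pi$ v\'erifiant $N\underset{\mathfrak{n}}{\sim}R-M+\pi P_0$, relation \'equivalente \`a $R\underset{\mathfrak{n}}{\sim}M+N-\pi P_0$. Comme $\deg(R-M+\pi P_0)=\pi$ et que, g\'en\'eriquement, $i_{\mathfrak{n}}(R-M+\pi P_0)=0$ et $l_{\mathfrak{n}}(R-M+\pi P_0)=1$ par le th\'eor\`eme \ref{thm:RR} et le lemme \ref{RRL}, un tel $N$ existe, est unique, et d\'epend rationnellement de $(M,R)$ : c'est l'inverse rationnel de $\alpha$. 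Le m\^eme argument appliqu\'e \`a $\beta:(M,N)\mapsto(M\star N,N)$, ou la commutativit\'e de $\star$, montre que $\beta$ est \'egalement birationnelle. Les trois propri\'et\'es r\'eunies donnent bien la structure de groupe birationnel sur ${C'}^{(\pi)}$.

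L'obstacle principal sera de justifier rigoureusement la d\'ependance alg\'ebrique du repr\'esentant effectif $R$ en fonction de $(M,N)$ --- c'est-\`a-dire de passer d'une correspondance ensembliste \`a un vrai morphisme de vari\'et\'es, ce qui revient \`a montrer que les espaces $L_{\mathfrak{n}}$ s'organisent en un fibr\'e de rang $1$ au-dessus de $U$ --- et de v\'erifier que toutes les conditions de g\'en\'ericit\'e (support \'etranger \`a $\supp(\mathfrak{n})$, annulation de $i_{\mathfrak{n}}$, g\'en\'ericit\'e des points intervenant dans les lemmes \ref{RRL} et \ref{RRL3}) peuvent \^etre r\'ealis\'ees simultan\'ement sur un m\^eme ouvert dense, de sorte que $\star$, son inverse et l'associativit\'e soient tous d\'efinis et compatibles sur cet ouvert.
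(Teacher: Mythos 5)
Le texte de l'article ne d\'emontre pas cette proposition : elle figure dans la section de rappels et est reprise telle quelle des travaux de Serre \cite{Ser}, la seule pr\'eparation fournie \'etant l'existence et l'unicit\'e de $R$ via le lemme \ref{RRL3}. Votre esquisse reconstitue pr\'ecis\'ement l'argument standard de Serre, et la liste des v\'erifications (rationalit\'e de $\star$, associativit\'e g\'en\'erique, birationalit\'e des translations partielles) est la bonne pour la notion de groupe birationnel au sens de Weil ; le calcul de Riemann--Roch ($\deg(M+N-\pi P_0)=\pi$, donc $l_{\mathfrak{n}}=1$ et $i_{\mathfrak{n}}=0$ g\'en\'eriquement) et la r\'eduction de l'associativit\'e \`a l'unicit\'e du repr\'esentant effectif de $M+N+L-2\pi P_0$ sont corrects, de m\^eme que la construction de l'inverse des translations en r\'esolvant $N\underset{\mathfrak{n}}{\sim}R-M+\pi P_0$. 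Vous identifiez aussi honn\^etement le point r\'eellement d\'elicat, que votre texte laisse \`a l'\'etat de programme : faire de la correspondance $(M,N)\mapsto R$ une application \emph{rationnelle} de vari\'et\'es, ce qui exige de montrer que les droites $L_{\mathfrak{n}}(M+N-\pi P_0)$ varient alg\'ebriquement avec $(M,N)$ (chez Serre, cela passe par les fonctions sym\'etriques et une analyse locale des conditions impos\'ees par $\mathfrak{n}$), ainsi que la compatibilit\'e des divers ouverts de g\'en\'ericit\'e. En l'\'etat, votre proposition est donc un plan de preuve correct et fid\`ele \`a la source plut\^ot qu'une d\'emonstration compl\`ete, ce qui correspond exactement au statut que lui donne l'article, lequel renvoie \`a \cite{Ser} sans d\'etailler.
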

Énonçons maintenant un résultat fondamental qui nous permettra la construction de jacobiennes généralisées 
\begin{prop}\label{pj}
 Tout groupe birationnel est birationnellement isomorphe à un (unique) groupe algébrique.
\end{prop}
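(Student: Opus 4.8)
Cette proposition est le th\'eor\`eme de r\'egularisation de Weil pour les lois de groupe birationnelles, et le plan est de reconstruire un mod\`ele sur lequel la loi $\star$ devient partout d\'efinie. Notons $G$ la vari\'et\'e ${C'}^{(\pi)}$ munie de la loi birationnelle $\mu=\star$. Par hypoth\`ese, $\mu\colon G\times G\dashrightarrow G$ est associative comme application rationnelle et les deux applications
\[
\theta\colon (x,y)\longmapsto (x,\mu(x,y)),\qquad \theta'\colon (x,y)\longmapsto (\mu(x,y),y)
\]
sont des automorphismes birationnels de $G\times G$. La premi\`ere \'etape consiste \`a en d\'eduire que, pour un point g\'en\'erique $a\in G$, la translation \`a gauche $\tau_a=\mu(a,\cdot)\colon G\dashrightarrow G$ est une application birationnelle de $G$, dont l'inverse est encore une translation; l'associativit\'e fournit en effet la relation $\tau_a\circ\tau_b=\tau_{\mu(a,b)}$ sur un ouvert dense, ce qui munit la famille des translations d'une structure de groupe compatible avec $\mu$.

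Deuxi\`eme \'etape: r\'egulariser $\mu$ en recollant des translat\'es. Comme $\theta$ est birationnelle, tout point de $G$ s'\'ecrit $\mu(a,b)$ avec $(a,b)$ g\'en\'erique, de sorte que les ouverts $\tau_a(G)$ recouvrent $G$ lorsque $a$ parcourt un ouvert dense. L'identit\'e $\mu(x,y)=\tau_a\big(\mu(\tau_a^{-1}(x),y)\big)$, cons\'equence directe de l'associativit\'e, exprime $\mu$ comme un compos\'e dont le lieu d'ind\'etermination est l'image par $\tau_a$ de celui de $\mu$. L'observation cl\'e est qu'une translation par un \'el\'ement g\'en\'erique d\'eplace tout point fix\'e hors du lieu d'ind\'etermination de $\mu$; en recollant ces cartes on obtient une vari\'et\'e $\overline{G}$ munie d'une multiplication qui est un morphisme partout d\'efini sur $\overline{G}\times\overline{G}$.

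Il reste \`a v\'erifier que $\overline{G}$ est bien un groupe alg\'ebrique. L'associativit\'e, \'etant une identit\'e d'applications rationnelles, vaut sur un ouvert dense, donc partout une fois la multiplication rendue r\'eguli\`ere; l'existence du neutre et de l'inverse comme morphismes d\'ecoule de la structure de groupe de la famille des translations, la division \'etant assur\'ee par la birationalit\'e de $\theta$ et $\theta'$. C'est ici que r\'eside la principale difficult\'e: montrer que le recollement des cartes translat\'ees est coh\'erent sur leurs intersections et produit un sch\'ema s\'epar\'e, ce qui repose sur le fait que le lieu d'ind\'etermination de $\mu$ est un ferm\'e propre que l'on peut \'eviter simultan\'ement par une translation g\'en\'erique.

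Enfin, pour l'unicit\'e \`a isomorphisme pr\`es: si deux groupes alg\'ebriques sont birationnellement isomorphes de mani\`ere compatible avec leurs lois, l'application birationnelle sous-jacente est un homomorphisme rationnel. Or un homomorphisme rationnel entre groupes alg\'ebriques lisses est partout d\'efini, car d\`es qu'il est d\'efini au voisinage du neutre il l'est en tout point par translation; \'etant de plus birationnel, c'est un isomorphisme de groupes alg\'ebriques, d'o\`u l'unicit\'e annonc\'ee.
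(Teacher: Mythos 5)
Premier point : l'article ne d\'emontre pas cette proposition. Elle y est \'enonc\'ee comme un r\'esultat admis --- c'est le th\'eor\`eme de Weil sur les lois de groupe birationnelles, repris de Serre \cite{Ser} --- et sert uniquement de bo\^ite noire pour construire $\Jac_{\mathfrak{n}}(C')$. Votre tentative est donc \`a \'evaluer pour elle-m\^eme ; elle suit bien la strat\'egie classique de Weil (r\'egularisation par translations g\'en\'eriques, recollement de cartes translat\'ees, extension des homomorphismes rationnels pour l'unicit\'e). Le plan est le bon, et la partie consacr\'ee \`a l'unicit\'e est correcte dans ses grandes lignes : un homomorphisme rationnel d'un groupe alg\'ebrique lisse connexe vers un groupe alg\'ebrique est partout d\'efini, donc une \'equivalence birationnelle compatible aux lois est un isomorphisme de groupes alg\'ebriques.

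En revanche, le c{\oe}ur de la d\'emonstration reste un trou que vous nommez sans le combler. L'identit\'e $\mu(x,y)=\tau_a\bigl(\mu(\tau_a^{-1}(x),y)\bigr)$ ne d\'eplace le lieu d'ind\'etermination que dans la premi\`ere variable ; il faut \'etablir que pour tout couple $(x,y)$ il existe un $a$ g\'en\'erique tel que $(\tau_a^{-1}(x),y)$ \'evite le lieu d'ind\'etermination de $\mu$, ce qui demande un argument de dimension pr\'ecis et non la seule remarque qu'une translation g\'en\'erique d\'eplace un point fix\'e. Surtout, le recollement des ouverts $\tau_a(G)$ indexe a priori une famille infinie de cartes : il faut en extraire un atlas fini, v\'erifier la condition de cocycle sur les intersections, d\'emontrer la s\'eparation de la vari\'et\'e obtenue, puis reconstruire l'\'el\'ement neutre et l'inverse comme morphismes (argument dit du \emph{group chunk}). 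C'est pr\'ecis\'ement l\`a que r\'eside toute la substance du th\'eor\`eme de Weil, et votre texte le signale explicitement sans le traiter. En l'\'etat, c'est un plan de preuve fid\`ele plut\^ot qu'une preuve ; pour ce r\'esultat, renvoyer \`a Serre \cite{Ser} ou \`a Weil, comme le fait l'article, est la solution raisonnable.
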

La proposition \ref{pj} révèle qu'il existe un groupe algébrique birationnellement isomorphe à $( {C'}^{(\pi)},\star)$. Ce groupe algébrique est de dimension $\pi$ et est appelé la
\emph{ jacobienne généralisée} de $ {C'}$ relative au module $\mathfrak{n}$, notée $\Jac_{\mathfrak{n}}( {C'})$. 
\begin{nota}
On note par $\varphi$ le morphisme birationnellement isomorphe entre $( {C'}^{(\pi)},\star)$ et $\Jac_{\mathfrak{n}}( {C'})$. 
\end{nota}
La construction suivante nous permettra de prolonger le morphisme $\varphi$ du groupe $( {C'}^{(\pi)},\star)$ vers le groupe des diviseurs étrangers à $\mathfrak{n}$.
Fixons un point $P_0$ arbitraire de ${C'}-\supp(\mathfrak{n})$ et $M=\sum\limits_{i=1}^\pi M_i$ un point générique de ${C'}^{(\pi)}$. Soit $D$ un diviseur de $\Div_{\mathfrak{n}}(C')$, il existe un élément de $ {C'}^{(\pi)}$ tel que
\begin{equation}
N \underset{\mathfrak{n}}{\sim} D-\deg(D)P_0+M.
\end{equation}
On note par $\theta$ le morphisme de $\Div_{\mathfrak{n}}(C')$ vers $\Jac_{\mathfrak{n}}( {C'})$ défini de la manière suivante:
 \begin{equation}\label{re}\begin{array}{cccl}
\theta:&\Div_{\mathfrak{n}}(C')& \longrightarrow &\Jac_{\mathfrak{n}}( {C'})\\
&D& \longrightarrow &\varphi(N)-\varphi(M)
\end{array}\end{equation}

\begin{prop}
L'application $\theta$ est indépendante du choix du point $P_0$ et de l'élément $M$.
\end{prop}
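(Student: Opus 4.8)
The plan is to reduce the entire statement to a single explicit formula for $\theta$, namely $\theta(D)=[D-\deg(D)P_0]$, where $[\,\cdot\,]$ denotes the class in $\Jac_{\mathfrak{n}}(C')$ of a divisor of degree zero prime to $\supp(\mathfrak{n})$. Once this formula is available, independence of $M$ is immediate, since the right-hand side contains no $M$, and independence of $P_0$ follows on the divisors that actually represent points of the Jacobian, namely those of degree zero, for which $D-\deg(D)P_0=D$.

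The first and main step is to identify the abstract isomorphism $\varphi$ with the naive class map. I would introduce the map $\psi:{C'}^{(\pi)}\longrightarrow\Jac_{\mathfrak{n}}(C')$ sending $A$ to the class $[A-\pi P_0]$ of the degree-zero divisor $A-\pi P_0$. Using the defining relation of $\star$, for generic $A,B$ one has $A\star B\underset{\mathfrak{n}}{\sim}A+B-\pi P_0$, hence $\psi(A\star B)=[A+B-2\pi P_0]=[A-\pi P_0]+[B-\pi P_0]=\psi(A)+\psi(B)$, so that $\psi$ is a homomorphism of birational groups. By the uniqueness part of Proposition \ref{pj}, the birational isomorphism from $({C'}^{(\pi)},\star)$ to the algebraic group $\Jac_{\mathfrak{n}}(C')$ is unique, whence $\varphi=\psi$. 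In particular, for any $A,B\in {C'}^{(\pi)}$ the base point cancels in the difference: $\varphi(A)-\varphi(B)=[A-\pi P_0]-[B-\pi P_0]=[A-B]$.

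With this identity in hand I would take $A=N$ and $B=M$. By construction $N\underset{\mathfrak{n}}{\sim}D-\deg(D)P_0+M$, so $N-M\underset{\mathfrak{n}}{\sim}D-\deg(D)P_0$, and therefore $\theta(D)=\varphi(N)-\varphi(M)=[N-M]=[D-\deg(D)P_0]$, an expression in which $M$ no longer appears; this gives independence of the auxiliary generic point $M$. For a concrete alternative that avoids the identification $\varphi=\psi$, one argues by uniqueness directly: given two choices $M_1,M_2$ with associated $N_1,N_2$, the relation $N_1+M_2\underset{\mathfrak{n}}{\sim}N_2+M_1$ forces $N_1\star M_2=N_2\star M_1$ by Lemma \ref{RRL3}, and applying the homomorphism $\varphi$ yields $\varphi(N_1)-\varphi(M_1)=\varphi(N_2)-\varphi(M_2)$.

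Finally, for the point $P_0$ the formula $\theta(D)=[D-\deg(D)P_0]$ shows that the base point entering the group law has disappeared from the difference, and on a divisor $D$ of degree zero, which is the case relevant to $\Jac_{\mathfrak{n}}(C')$, one simply has $\theta(D)=[D]$, manifestly independent of $P_0$. I expect the main obstacle to be precisely the first step: justifying rigorously that the abstract, merely birational isomorphism $\varphi$ furnished by Proposition \ref{pj} coincides with the class map $\psi$ at all the points needed, even though the identity element $\pi P_0$ of $({C'}^{(\pi)},\star)$ need not be a generic point. This is exactly where one invokes that a birational group is birationally isomorphic to a genuine algebraic group, so that the homomorphism property together with the uniqueness of Lemma \ref{RRL3} remains valid at the non-generic points as well.
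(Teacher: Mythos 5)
The paper itself gives no proof of this proposition: it is stated as a recalled fact and implicitly deferred to Serre \cite{Ser}, so there is no internal argument to compare yours against. Judged on its own, your proposal follows the standard route and its core is sound, but two steps deserve scrutiny. First, the identification $\varphi=\psi$ ``by the uniqueness part of Proposition \ref{pj}'' is not justified as stated: that proposition asserts uniqueness of the algebraic group, not of the birational isomorphism onto it, and two such isomorphisms may differ by an automorphism of $\Jac_{\mathfrak{n}}(C')$. What you actually need (and what Serre proves) is that $\Jac_{\mathfrak{n}}(C')$ parametrizes $\mathfrak{n}$-equivalence classes of degree-zero divisors prime to $\supp(\mathfrak{n})$ and that $\varphi$ is the class map; quoting that is fine, but it is a theorem, not a formal consequence of uniqueness. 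Fortunately your ``concrete alternative'' for the independence of $M$ bypasses this entirely: from $N_1+M_2\underset{\mathfrak{n}}{\sim}N_2+M_1$ and the uniqueness in Lemma \ref{RRL3} one gets $N_1\star M_2=N_2\star M_1$, and applying the homomorphism $\varphi$ gives $\varphi(N_1)-\varphi(M_1)=\varphi(N_2)-\varphi(M_2)$. That argument is complete (up to the routine genericity caveat) and is the one to keep.

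Second, the independence of $P_0$ is more delicate than your last paragraph suggests. Your own formula $\theta(D)=[D-\deg(D)P_0]$ shows that for $\deg(D)\neq 0$ the value genuinely changes with $P_0$ (by $\deg(D)\,[P_0-P_0']$), so the proposition can only be read as concerning degree-zero divisors, or as independence modulo the degree map; you flag this but do not resolve it. Moreover, even for $\deg(D)=0$, changing $P_0$ changes the composition law $\star$ and hence the isomorphism $\varphi$, so one must still check that the difference $\varphi(N)-\varphi(M)$ is unaffected; this follows once one knows the two group laws differ by a translation (equivalently, once $\varphi=\psi$ is established), but it does not come for free. In short: the $M$-independence is proved; the $P_0$-independence rests on the unproved identification of $\varphi$ with the class map, which is precisely the point you yourself identify as the main obstacle.
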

L'application $\theta$, peut-être appelée le \emph{morphisme de Serre}.
Exposons la relation entre la jacobienne généralisée de $C'$ associée au diviseur ${\mathfrak{n}}$ et la jacobienne généralisée de $C'$ associée au diviseur ${\mathfrak{n}'}$ où:
$$\supp(\mathfrak{n}')\subseteq \supp(\mathfrak{n}) \text{ et } \mathfrak{n}' \leqslant \mathfrak{n}.$$
Le morphisme suivant permet d'établir la relation entre les jacobiennes:
 \begin{equation}\begin{array}{cccl}
&({C'}^{(\pi)},\star)& \longrightarrow &\Div_{\mathfrak{n'}}({C'})\\
&(M_1, \cdots,M_{\pi'})& \longrightarrow &\sum\limits_{i=0}^{\pi}M_i
\end{array}\end{equation}
Du fait que $({C'}^{(\pi)},\star)$ est rationnellement isomorphe à $\Jac(C')_{\mathfrak{n}}$ et à l'aide du morphisme de Serre entre $\Div_{\mathfrak{n'}}({C'})$ et $ \Jac(C')_{\mathfrak{n'}}$, on aboutit au morphisme suivant: 
 \begin{equation}\label{is}\begin{array}{cccl}
&\Jac(C')_{\mathfrak{n}}& \longrightarrow &\Jac(C')_{\mathfrak{n'}}
\end{array}\end{equation}
\begin{rem}
Lorsque le diviseur $\mathfrak{n'}$ est nul, le morphisme (\ref{is}) sera entre la jacobienne généralisée $\Jac(C')_{\mathfrak{n}}$ et la jacobienne usuelle $\Jac(C')$.
\end{rem}
\smallskip

\section{Jacobienne généralisée d'une courbe hyperelliptique}
Nous allons nous intéresser, aux jacobiennes généralisées de courbes hyperelliptiques singulières, afin de compléter notre compréhension des fibres des systèmes de Mumford. Nous nous limiterons dans cette section aux rappels vus au
paragraphe \ref{par:jac_gen} des courbes hyperelliptiques et nous expliciterons le lien entre la jacobienne généralisée et la jacobienne usuelle évoquée dans la section \ref{Al}. 

\smallskip

\subsection{Courbes hyperelliptiques}
\begin{defi}\upshape
Une courbe hyperelliptique $C$ est une surface de Riemann, d'équation affine ${y^2=P(x)}$ où $P\in \mathbb{C}[x]$ avec $\deg(P)\geqslant 3$. Le genre arithmétique $g$ de la courbe $C$ est égal à la partie entière de la fraction $\left[\frac{\deg(P)}{2}\right]$.
\end{defi}

On a deux types d'équation affine de courbe hyperelliptique $C$ de genre $g$:
\begin{align}\label{eq:hyp_aff_odd}
 y^2&=x^{2g+1}+c_{2g}x^{2g}+\cdots+c_1x+c_0\;,\\
 y^2&=\prod\limits_{i=1}^{2g+1}(x-a_i);
\end{align}%
et
\begin{align}\label{eq:hyp_aff_even}
 y^2&=x^{2g+2}+c_{2g+1}x^{2g+1}+\cdots+c_1x+c_0\;,\\
 y^2&=\prod\limits_{i=1}^{2g+2}(x-a_i).
\end{align}
où les coefficients $c_i$ et les racines $a_i$ appartiennent à $\mathbb C$.\\%
Autrement exprimé une courbe hyperelliptique est les zéros de polynômes homogènes de formes suivantes 
$$\begin{array}{ccclcccl}
&\mathbb{P}^1& \longrightarrow &{ \mathbb{C}}\,&\,&\mathbb{P}^1& \longrightarrow & {\mathbb{C}}\\
&(x,y,z)&\longmapsto & y^2z^{2g-1}-\sum\limits_{i=0}^{2g+1}c_ix^{2g+1-i}z^{i},\,&\, &(x,y,z)&\longmapsto & y^2z^{2g}-\sum\limits_{i=0}^{2g+2}c_ix^{2g+2-i}z^{i}.
\end{array}$$ 

Toute courbe hyperelliptique admet une application $\imath$ appelée \emph{l'involution hyperelliptique} définie comme il suit:
$$\begin{array}{cccc}
\imath:&C& \longrightarrow &C\\
&(x,y)&\longmapsto &(x,-y)
\end{array}$$ 

Il y a des points d'une courbe hyperelliptique qu'on appelle les points à l'infini qu'on note $\infty$ et $+\infty,-\infty$, ces points à l'infini se manifestent lorsque $x$ atteint le point infini:
 \begin{gather}
\tilde{y}^2=\tilde{x}\prod\limits_{i=1}^{2g+1}(1-a_i\tilde{x});\quad \text{ ou }\quad \tilde{y}^2=\prod\limits_{i=1}^{2g}(1-a_i\tilde{x}).
\end{gather} 
où $\tilde{x}=\frac{1}{x}$ et $\tilde{y}=\frac{y}{x^{g}}$. Quand $x$ est le point infini, le point $\tilde{x}$ est zéro, ce qui implique \\
 \begin{gather}
\tilde{x}=0:\; \quad \tilde{y}^2=0 \quad \text{ ou }\quad \tilde{y}^2=1.
\end{gather} 
Alors les points à l'infini de courbes hyperelliptiques sont
$$\begin{array}{clcccc}
 \infty &\text{ attribué à }& \tilde{x}=0,\;& \quad \tilde{y}=0 \quad &\text{ pour }&\quad C: y^2=\prod\limits_{i=1}^{2g+1}(x-a_i),\\
+\infty,-\infty & \text{ attribués à } & \tilde{x}=0,\;& \quad \tilde{y}=\pm1 \quad &\text{ pour }&\quad C: y^2=\prod\limits_{i=1}^{2g+2}(x-a_i).
\end{array}$$
\smallskip

Soit $C$ 
une courbe hyperelliptique singulière \footnote{ les notations introduites dans ce paragraphe seront utilisées tout au long de cet article. }, d'équation affine $y^2=h(x)$, où $h$ est un polynôme unitaire de
degré $2g+1$. Notons par $P(x)$ le diviseur quadratique maximal de $h(x)$, c'est-à-dire $P(x)$ est le polynôme
unitaire de degré maximal tel que $P^2(x)$ divise $h(x)$. La normalisée de $C$ est aussi une courbe hyperelliptique
lisse qu'on notera $C'$ d'équation affine $z^2=h'(x)$. Le genre de $C'$ est $g'$ car $h'$ est un polynôme unitaire de degré $2g'+1$.
On a le morphisme entre $C'$ et $C$ défini de la manière suivante:
$$
\begin{array}{rcccc}
 \phi&:&C' &\longrightarrow & C \\
 &&(x,z)& \longmapsto & (x,P(x)z). \\ 
\end{array}
$$

\smallskip

Afin de définir le module sur $C'$ qui correspond aux points singuliers de $C$, nous devons factoriser $P$ et distinguer les racines communes entre $P$ et $h'$ des autres.\\
La factorisation de $P$ est la suivante:
\begin{align}P(x)=\prod\limits_{i=1}^{k}(x-a_{i})^{\ell_{i}}\qquad \text{ avec } \{\ell_1,\cdots,\ell_k\}\in\mathbb{N}^*\end{align}
où toutes les racines $a_{i} \in \mathbb{C}$ de $P$ sont distinctes, avec $h'(a_{i})=0$ pour $1\leqslant i\leqslant d$ et
$h'(a_{i})=b_i^2\neq 0$ pour $d+1\leqslant i\leqslant k$. 
Notons par $n$ le degré de $P$, alors
\begin{equation*}
 n=\sum_{i=1}^k\ell_i\; \qquad \hbox{ et }\qquad g=g'+n\;.
\end{equation*}%
On note par $\mathfrak{m}$ le module de ${C'}$, défini par
\begin{align*}
 \mathfrak{m}&=\sum\limits_{i=1}^{d}2\ell_{i}(a_{i},0)+\sum\limits_{i=d+1}^{k}\ell_{i}((a_{i},b_{i})+(a_{i},-b_{i})).
\end{align*}
Dans le lemme suivant nous calculons la valeur des entiers $\delta$ et $\pi$ qui figurent dans le théorème de Riemann-Roch \ref{thm:RR}.
\begin{lem}\label{delta}
 L'entier $\delta$ est donné par $\delta=\sum\limits_{i=1}^k\delta_{(a_i,0)}$ où $\delta_{(a_i,0)}=\ell_i$, pour
 $i=1,\dots,k$. En particulier, $\pi=g'+\delta=g'+n=g$.
\end{lem}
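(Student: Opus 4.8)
Since $\delta=\sum_{Q\in S}\delta_Q$ is a sum of purely local contributions, and the singular points of $C$ are exactly the points $(a_i,0)$, $i=1,\dots,k$ (the points where $y=0$ and $x=a_i$ is a multiple root of $h=P^2h'$), the plan is to compute each $\delta_{(a_i,0)}$ separately, passing to complete local rings and comparing $\mathcal{O}_{(a_i,0)}$ with its integral closure $\widetilde{\mathcal{O}}_{(a_i,0)}$. The two cases $1\le i\le d$ (where $a_i$ is a root of $h'$) and $d+1\le i\le k$ (where it is not) match the two cases in the definition of $\delta_Q$, because over $(a_i,0)$ the map $\phi$ has a single preimage in the first case and the two preimages $(a_i,\pm b_i)$ in the second. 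I would first record that $h'$ is squarefree — otherwise a square factor of $h'$ could be absorbed into $P$, contradicting its maximality — so that every root of $h'$ is simple and the points $(a_i,0)$, $1\le i\le d$, are genuine ramification points of $C'$.

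\textbf{Single branch.} For $1\le i\le d$, I would use that $z$ is a uniformizer at $(a_i,0)$ on $C'$ and that $z^2=h'(x)=(x-a_i)\cdot(\text{unit})$ forces $v(x-a_i)=2$. Pulling the affine coordinates back along $\phi$ gives $\phi^*(x-a_i)=z^2\cdot(\text{unit})$ and $\phi^*(y)=P(x)z=z^{2\ell_i+1}\cdot(\text{unit})$, so $\widehat{\mathcal{O}}_{(a_i,0)}$ is the subring of $\mathbb{C}[[z]]$ generated by elements of valuation $2$ and $2\ell_i+1$; equivalently the singularity is analytically the cusp $y^2=x^{2\ell_i+1}$. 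Its value semigroup $\langle 2,\,2\ell_i+1\rangle$ has exactly the $\ell_i$ gaps $1,3,\dots,2\ell_i-1$, whence $\delta_{(a_i,0)}=\dim_{\mathbb{C}}\bigl(\mathbb{C}[[z]]/\widehat{\mathcal{O}}_{(a_i,0)}\bigr)=\ell_i$.

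\textbf{Two branches.} For $d+1\le i\le k$, set $t=x-a_i$, a uniformizer at each preimage $Q_i^\pm=(a_i,\pm b_i)$, so $\widetilde{\mathcal{O}}_{(a_i,0)}\cong\mathbb{C}[[t]]\times\mathbb{C}[[t]]$. The decisive observation is that, since $z^2=h'(x)$ is one and the same series in $t$ on both branches, $z$ restricts to opposite series $z|_{Q_i^-}=-z|_{Q_i^+}$. With $P(x)=t^{\ell_i}\cdot(\text{unit})$ near $a_i$, this yields
\begin{equation*}
\phi^*(x-a_i)=(t,t),\qquad \phi^*(y)=P(x)z=\bigl(t^{\ell_i}c(t),\,-t^{\ell_i}c(t)\bigr)=t^{\ell_i}c(t)\,(1,-1),
\end{equation*}
with $c$ a unit. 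Noting that $\phi^*(y)^2$ lies on the diagonal, over the diagonal subring $\mathbb{C}[[t]](1,1)\cong\mathbb{C}[[t]]$ one gets $\mathcal{O}_{(a_i,0)}=\mathbb{C}[[t]](1,1)\oplus t^{\ell_i}\mathbb{C}[[t]](1,-1)$ inside $\widetilde{\mathcal{O}}_{(a_i,0)}=\mathbb{C}[[t]](1,1)\oplus\mathbb{C}[[t]](1,-1)$, so that $\widetilde{\mathcal{O}}_{(a_i,0)}/\mathcal{O}_{(a_i,0)}\cong\mathbb{C}[[t]]/t^{\ell_i}\mathbb{C}[[t]]$ and again $\delta_{(a_i,0)}=\ell_i$.

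\textbf{Conclusion.} Summing over $i$ would give $\delta=\sum_{i=1}^k\ell_i=n$, hence $\pi=g'+\delta=g'+n=g$. The hard part will be the two-branch case: one must see that $y=P(x)z$ points in the anti-diagonal direction with valuation exactly $\ell_i$ — which hinges on $z$ changing sign between the two branches — since otherwise the local contribution would be mis-counted. The remaining steps are routine valuation bookkeeping.
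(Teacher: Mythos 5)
Your proof is correct and follows essentially the same route as the paper: both compute $\delta_{(a_i,0)}=\dim(\mathcal{O}'_{(a_i,0)}/\mathcal{O}_{(a_i,0)})=\ell_i$ locally at each singular point, splitting into the ramified case $1\leqslant i\leqslant d$ and the two-branch case $d+1\leqslant i\leqslant k$, then sum to get $\delta=n$ and $\pi=g$. Your execution via complete local rings is somewhat more detailed than the paper's (which simply exhibits the bases $1,y,\dots,y^{\ell_i-1}$ and $1,(x-a_i),\dots,(x-a_i)^{\ell_i-1}$ of the respective quotients), in particular in making explicit the sign flip of $z$ between the two branches and the value semigroup $\langle 2,2\ell_i+1\rangle$, but the underlying computation is the same.
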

\begin{proof}[Preuve]
Soit $\phi$ la projection de $C'$ vers $C$:
$$\begin{array}{cccc}\phi:&C'&\longrightarrow& C\\ &(x,z) &\longmapsto& (x,P(x)z)\end{array}\,$$ Pour un point $p$
de $C$ (resp.\ un point $q$ de $C'$), on note par $\mathcal{O}_{p}$ (resp. $\mathcal{O}'_{q}$) l'anneau local de
$C$ (resp. $C'$).
Pour $p=(a_{i},0)$ pour $1\leqslant i\leqslant d$
$$\mathcal{O}_{p}= \{g\in \mathbb{C}(C)\mid f=g \circ \phi \text{ avec } f\in\mathcal{O}'_{(a_{i},0)} \text{ et } v_{(a_{i},0)}(f)\geqslant \ell_{i}\}\;.$$

Par définition, $\delta_{(a_{i},0)}=\dim(\mathcal{O}'_{(a_{i},0)}/\phi^{-1}(\mathcal{O}_{(a_{i},0)}))$, donc
$$
 \delta_{(a_{i},0)}=\dim\left<1,y,y^2,\dots,y^{\ell_{i}-1}\right>=\ell_{i}\;.
$$
Pour $p=(a_{i},0)$ pour $d+1\leqslant i\leqslant k$
$$\mathcal{O}_{p}= \{g\in \mathbb{C}(C)\mid f=g \circ \phi \text{ avec } f\in\mathcal{O}'_{(a_{i},b_{i})}\cap
\mathcal{O}'_{(a_{i},-b_{i})} \text{ et } v_{(a_{i},b_{i})}(f)=v_{(a_{i},-b_{i})}(f)\geqslant \ell_{i}\}\;.$$
Par la définition de $\delta_{(a_{i},0)}$, on a dans ce cas également
$$
 \delta_{(a_{i},0)}=\dim\left<1,(x-a_{i}),(x-a_{i})^2,\dots,(x-a_{i})^{\ell_{i}-1}\right>=\ell_{i}\;.
$$
L'application $\phi$ est un isomorphisme entre $C-\{(a_{1},0),\dots,(a_{d},0),(a_{d+1},0),\dots, (a_k,0)\}$ et
${C'-\supp{\mathfrak{m}}} $ où on rappelle que
$\mathfrak{m}=\sum\limits_{i=1}^{d}2l_{i}(a_{i},0)+\sum\limits_{i=d+1}^{k}l_{i}((a_{i},b_{i})+(a_{i},-b_{i}))$.
Pour tout point $p$ de $ C- \left\{(a_{1},0),\dots,(a_{d},0),(a_{d+1},0),\dots, (a_k,0)\right\}$,
on a
$$\mathcal{O}_{p}= \{g\in \mathbb{C}(C)\mid f=g \circ \phi \text{ avec } f\in\mathcal{O}'_{\phi^{-1}(p)}\}\;.$$
Par conséquent, 
$$\delta_{p}= \dim \mathcal{O}'_{\phi^{-1}(p)}/\phi^{-1}(\mathcal{O}_{p})=0\; $$
pour tout $p$ différent des points singuliers de $C$.
Puisque, par définition, $\delta=\sum\limits_{p\in C}\delta_{p}$, on trouve 
$$\delta=\sum\limits_{i=1}^{k}\delta_{(a_{i},0)}=\sum_{i=1}^k\ell_{i}=n\;,$$
et donc $$\pi=g'+n=g\;.$$
\end{proof}
Le théorème Riemann-Roch (\ref{thm:RR}) prend la forme suivante 
$$l_{\mathfrak{m}}(D)-i_{\mathfrak{m}}(D)=\deg(D)+1-g\;.$$
\smallskip
D'après les rappels du paragraphe
\ref{par:jac_gen}, on sait que le produit symétrique ${C'}^{(g)}$ muni d'une loi de composition rationnelle en fait un groupe birationnel,
qui est birationnellement isomorphe avec la jacobienne généralisée
$\Jac_{\mathfrak m}({C'})$.
Le morphisme de Serre $\theta$ est entre le groupe des diviseurs de ${C'}$, étrangers à $\supp(\mathfrak{m})$, et
$\Jac_\mathfrak{m}({C'})$ induit un autre morphisme $\tau:\Jac_\mathfrak{m}({C'})\to \Jac_0({C'})=\Jac({C'})$., En traduisant 
un élément $u$ de $\Jac_\mathfrak{m}({C'})$ comme la classe de $\mathfrak{m}$-équivalence d'un diviseur de degré zéro, disjoint
du support de $\mathfrak{m}$, dès lors l'élément $\tau(u)$ est simplement la classe d'équivalence linéaire de $u$. Ceci implique que
$\Jac_\mathfrak{m}({C'})$ est une extension de $\Jac({C'})$ par $L_\mathfrak{m}:=\Ker\tau$. Serre décrit ce dernier groupe
abélien comme produit de plusieurs copies du groupe additif $\mathbb C$ et du groupe multiplicatif $\mathbb
C^*$. Sur la courbe hyperelliptique singulière $C$, on peut montrer que
$$
 L_\mathfrak{m}\simeq\mathbb{C}^{*k-d}\times \mathbb{C}^{n-k+d}\;.
$$
De ces faits, on a la suite exacte de
groupes abéliens suivante:
\begin{equation}\label{suite}
 0\longrightarrow \mathbb{C}^{*k-d}\times \mathbb{C}^{n-k+d}\longrightarrow \Jac_\mathfrak{m}({C'})\longrightarrow
 \Jac({C'})\longrightarrow 0\;.
\end{equation}%
\section{Description géométro-algébrique des strates $M_{g,g}(h)$}
L'objectif de cette section est de décrire les fibres du système de
Mumford impair d'ordre $g$ à l'aide des outils de la géométrie algébrique. Lorsque la fibre est lisse , elle est décrite par Mumford comme isomorphe à une jacobienne usuelle moins son
diviseur thêta. Consacrons-nous à l'étude des fibres singulières. 
\subsection{Rappel du systeme de Mumford}
Soit $g$ entier positif et soit $M_g$ un espace affine complexe de dimension ${3g+1}$ de coordonnées $ u_{g-1},u_{g-2}, \cdots ,u_0,$ $v_{g-2},\cdots,v_0,$ $w_{g},w_{g-1},\cdots,w_0$ , s'exprimant à l'aide de matrices polynomiales $2\times 2$ de trace nulle de la forme suivante:
$$M_g:=\left\lbrace \left( \begin{array}{cc}
v(x) & u(x) \\
w(x) & -v(x)
\end{array} \right) \text{ tel que }
\begin{array}{ccl}
 u(x) &=& x^g+u_{g-1}x^{g-1}+u_{g-2}x^{g-2}+\cdots+u_0\\
 v(x) & =& v_{g-1}x^{g-1}+v_{g-2}x^{g-2}+\cdots+v_0\\
 w(x) & = & x^{g+1}+w_{g}x^{g}+w_{g-1}x^{g-1}+\cdots+w_0
\end{array} \right\rbrace \simeq\mathbb{C}^{3g+1}. $$
Le système de Mumford d'ordre $g$ est un système intégrable d'espace de phases $M_g$.%

Le determinant d'une matrice $A(x)= \left( \begin{array}{cc}
v(x) & u(x) \\ 
w(x) & -v(x)
\end{array} \right)$ de $M_g$, est un polynôme de degré $2g+1$ 
$$\det(A(x))=-[v(x)^2+u(x)w(x)].$$
La forme des determinants des matrices de $M_g$, nous incite à définir l'application $\mathbf{H}$ 
$$\begin{array}{cccl}
 \mathbf{H}:&M_{g}& \longrightarrow & {H}_{g}\\
 &A& \longmapsto & -\det(A(x))
\end{array}$$
où ${H}_{g}$ est l'ensemble des polynômes unitaires de degré $2g+1$.
L'application $\mathbf{H}$ est l'application \emph{moment} du système de Mumford.

\smallskip

La fibre de l'application moment $\mathbf{H}$ au-dessus d'un polynôme $h(x)$ de ${H}_{g}$ est notée
$M_{g}(h)$ et est déterminée de cette manière 
$$M_{g}(h)=\left\{A(x)\in M_{g} \mid \det(y{I_{2}}-A(x))=y^{2}-h(x) \right\}.$$
Toutes les matrices appartenant à une fibre $M_g(h)$ ont un même polynôme caractéristique, les annulateurs de ce dernier 
forment une courbe hyperelliptique $C$ d'équation affine $y^2=h(x)$ et de genre arithmétique $g$. Lorsque $h(x)=P^2(x)h'(x)$, la fibre est associée à une courbe hyperelliptique singulière d'équation affine $C:y^2=h(x)$.
\smallskip
Nous avons vu précédemment, que quand le polynôme $h$ admet des racines multiples, la fibre $M_g(h)$ de $\mathbf H$ admet
une stratification (la stratification \emph{fine}) telle que chaque strate est isomorphe à la strate maximale d'une
fibre d'un système de Mumford d'ordre inférieur. Par conséquent, afin de décrire les fibres singulières $M_{g}(h)$
du système de Mumford, il suffit de décrire les strates maximales $M_{g,g}(h)$ où l'on rappelle que
$$M_{g,g}(h)=\left\{A(x)=\left( \begin{array}{cc}
 v(x)& u(x) \\ 
 w(x) & -v(x)
\end{array}\right)\in M_{g}(h)\mid \PGCD(u,v,w)=1 \right\}.
$$
On note par $(D_i)_{i=0,1,\dots,g-1}$ les champs de vecteurs qui définissent le système de Mumford et qui s'écrivent sous la forme de l'équation de Lax
\begin{align}
D_i\vert_A&=\left[A(x),\left[ \displaystyle{\frac{A(x)}{x^{i+1}}}\right]_+ - \left( \begin{array}{cc}
0 & 0 \\ 
u_i & 0
\end{array} \right)\right], \label{c'}
\end{align}
avec $\left[ {\frac{A(x)}{x^{i+1}}}\right]_+$ est la partie polynomiale de la matrice $ {\frac{A(x)}{x^{i+1}}}$. 
On peut réécrire $M_{g,g}(h)$ à l'aide des champs de vecteurs $(D_i)_{i=0,1,\dots,g-1}$ de la manière suivante:
$$M_{g,g}(h)=\left\{A(x)\in M_{g}(h)\mid \dim<D_0\vert_A,\cdots,D_{g-1}\vert_A> =g \right\}.
$$
\smallskip
\begin{nota}
Pour deux polynômes $S$ et $q$, on note par $S_{q}$ le quotient de $S$ par $q$,
$$S(x)=q(x)S_{q}(x).$$
L'ensemble des polynômes unitaires $Q$ de $\mathbb{C}[x]$ tel que $Q^2$ divise $h$ est noté par
$\mathbb{C}[x]_{h}$.
\end{nota}

\bigskip
\subsection{Lien entre la strate $M_{g,g}(h)$ et la jacobienne $\Jac_{\mathfrak{m}}(C')$.}
Dans cette section, nous définissons un morphisme injectif entre la strate $M_{g,g}(h)$ et un ouvert de la
jacobienne $\Jac_\mathfrak{m}({C'})$. Nous montrons également que les champs de vecteurs indépendants $D_i$ sont
envoyés par ce morphisme sur des champs invariants sur le groupe algébrique $J_\mathfrak{m}({C'})$.
\smallskip
Fixons le point $P_{0}$ égal au point $\infty$ de la courbe lisse ${C'}$, comme le point de base du
morphisme de Serre $\theta$ entre le groupe des diviseurs de ${C'}$, étrangers à $\supp(\mathfrak{m})$, et
$\Jac_\mathfrak{m}({C'})$. Son noyau est formé des diviseurs étrangers au support de $\mathfrak{m}$ qui sont
$\mathfrak{m}$-équivalents à un multiple de $\infty$.
\begin{lem}\label{Deg}
Soit $A(x)=\left( \begin{array}{cc}
 v(x)& u(x) \\ 
 w(x) & -v(x)
\end{array}\right)\in M_{g,g}(h)$.\\

Si $\PGCD(P^{2},u)=R$, alors $R$ est le carré d'un polynôme unitaire $Q=\PGCD(P,u,v)$. Réciproquement si $\PGCD(P,u,v)=Q$ alors $\PGCD(P^{2},u)=Q^{2}$.\\ 

\smallskip

Si $\PGCD(P^{2},w)=R$, alors $R$ est le carré d'un polynôme unitaire $Q=\PGCD(P,w,v)=Q$. Réciproquement si $\PGCD(P,w,v)=Q$ alors $\PGCD(P^{2},w)=Q^{2}$.
\end{lem}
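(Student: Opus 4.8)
The plan is to deduce both implications from the single determinant identity $v(x)^2 + u(x)w(x) = h(x) = P(x)^2 h'(x)$, which holds because $-\det A(x) = h(x)$ for every $A(x) \in M_g(h)$. Since $\PGCD(P^2, u)$ and $\PGCD(P, u, v)$ are both supported on the roots of $P$ alone, I would first reduce everything to a comparison of valuations, root by root. Fix a root $a$ of $P$ and write $\ell = v_a(P) \geqslant 1$, $\alpha = v_a(u)$, $\beta = v_a(v)$, $\gamma = v_a(w)$. Because $A$ lies in the maximal stratum $M_{g,g}(h)$ we have $\PGCD(u,v,w)=1$, hence $\min(\alpha,\beta,\gamma)=0$; and because $P$ is the maximal square divisor of $h$, the cofactor $h' = h/P^2$ is squarefree, so $\epsilon := v_a(h') \in \{0,1\}$ and $v_a(h) = 2\ell + \epsilon$.

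With this notation the valuation of $\PGCD(P^2,u)$ at $a$ is $\min(2\ell,\alpha)$, while the valuation of $\PGCD(P,u,v)^2$ at $a$ is $2\min(\ell,\alpha,\beta)$. So both directions of the first assertion collapse to the single local identity
$$\min(2\ell,\alpha) = 2\min(\ell,\alpha,\beta),$$
to be checked at every root $a$ of $P$: once it holds at all such roots, the monic polynomials $\PGCD(P^2,u)$ and $\PGCD(P,u,v)^2$ coincide, which is exactly the forward statement and its converse together.

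To prove the local identity I would argue by cases. If $\alpha = 0$ both sides vanish. If $\alpha > 0$, then $u(a) = 0$, and comparing valuations in $v^2 + uw = P^2 h'$ forces $\beta > 0$ (otherwise the left side would have valuation $0$ whereas the right side has valuation $\geqslant 2$); coprimality then gives $\gamma = 0$. I would then read off the valuation of the sum $v^2 + uw$, which must equal $2\ell + \epsilon$, by weighing $v_a(v^2) = 2\beta$ against $v_a(uw) = \alpha$ in the three cases $2\beta < \alpha$, $2\beta > \alpha$, and $2\beta = \alpha$. A short parity argument, using that $2\ell + \epsilon$ has fixed parity while $2\beta$ is even, determines in each case which of $\ell, \alpha, \beta$ attains the minimum and yields the identity. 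The statement for $w$ is obtained verbatim from the symmetry $u \leftrightarrow w$ of the relation $v^2 + uw = P^2 h'$.

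The step I expect to be the main obstacle is the borderline case $2\beta = \alpha$, where cancellation can occur in $v^2 + uw$ and the valuation of the sum is not forced by those of its terms. Here the parity of $v_a(h')$, that is, whether $a$ is a root of the squarefree part $h'$, is precisely what distinguishes $\beta = \ell$ from $\beta < \ell$, and hence what guarantees that $\PGCD(P^2,u)$ comes out as a perfect square rather than an odd power. Making this parity bookkeeping airtight is the crux of the argument.
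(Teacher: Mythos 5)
Votre plan est correct et chacun des cas que vous d�crivez aboutit ; la r�duction � l'identit� locale $\min(2\ell,\alpha)=2\min(\ell,\alpha,\beta)$ en chaque racine de $P$ capture bien l'�nonc� direct et sa r�ciproque d'un seul coup, puisque les deux PGCD sont unitaires et support�s par les racines de $P$. La preuve du papier part des m�mes trois ingr�dients (l'identit� $v^2+uw=P^2h'$, la coprimalit� $\PGCD(u,v,w)=1$, et le fait que $P^2$ et $v^2$ sont des carr�s), mais s'organise autrement : elle montre d'abord par l'absurde qu'aucune racine de $\PGCD(P^2,u)$ ne peut �tre d'ordre impair (si $(x-a)^{2k-1}$ est la puissance exacte, alors $(x-a)^{2k}$ divise $P^2$ et $v^2$, donc $uw$, donc $u$ car $w(a)\neq 0$), identifie ensuite la racine carr�e obtenue avec $\PGCD(P,u,v)$ par une seconde contradiction, puis d�duit la r�ciproque en r�invoquant le sens direct ; elle n'utilise jamais le fait que $h'$ est sans facteur carr�. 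Votre d�coupage ach�te un �nonc� unifi� d�montr� par calcul direct et rend la r�ciproque gratuite, au prix d'une disjonction sur la position de $2\beta$ par rapport � $\alpha$. Une remarque qui devrait dissiper votre inqui�tude principale : le cas limite $2\beta=\alpha$, que vous pr�sentez comme le point d�licat parce que la valuation de la somme $v^2+uw$ n'y est pas d�termin�e, est en r�alit� le cas trivial --- d�s que $\alpha=2\beta$ on a identiquement $\min(2\ell,\alpha)=\min(2\ell,2\beta)=2\min(\ell,\beta)=2\min(\ell,\alpha,\beta)$, sans aucun recours � l'�quation de la courbe. La parit� de $\epsilon=v_a(h')$ ne sert que dans le sous-cas $2\beta<\alpha$ (o� elle force $\beta=\ell$), et m�me l� la conclusion $\min(2\ell,\alpha)=2\ell=2\min(\ell,\alpha,\beta)$ subsisterait sans elle, puisque $2\beta=2\ell+\epsilon\geqslant 2\ell$ et $\alpha>2\beta$ suffisent.
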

\begin{proof}[Preuve]
Soit $A(x)=\left( \begin{array}{cc} v(x)& u(x) \\ w(x) & -v(x)
\end{array}\right)\in M_{g,g}(h)$.
Soit $a\in \mathbb{C}$ une racine de $\PGCD(P^{2},u)$, supposons que $\PGCD(P^{2},u,(x-a)^{2k})=(x-a)^{2k-1}$ avec $k\in \mathbb{N}^{*}$.
On rappelle que 
\begin{align} \label{eqa2}
P^{2}(x)h'-u(x)w(x)&=v^{2}(x).
\end{align}
Le polynôme $(x-a)^{2k-1}$ divise le côté gauche de l'égalité (\ref{eqa2}), donc $(x-a)^{2k-1}$ divise $v^{2}(x)$
ceci implique que $(x-a)^{2k}$ divise $v^{2}(x)$. De même, comme $(x-a)^{2k-1}$ divise $P^{2}(x)$ ceci implique que
$(x-a)^{2k}$ divise $P^{2}(x)$, donc le polynôme $(x-a)^{2k}$ divise $u(x)w(x)$, étant donné que 
$\PGCD(u,v,w)=1$ et comme $a$ est une racine de $u$ alors $(x-a)^{2k}$ divise $u(x)$. on a bien: $$\PGCD(P^{2},u,(x-a)^{2k})=(x-a)^{2k}\;,$$ 
ce qui contredit l'hypothèse.
On conclut que toute racine du polynôme $\PGCD(P^{2},u)$ est une racine d'ordre pair,
d'où l'existence d'un polynôme unitaire $Q$ tel que $Q^{2}(x)=\PGCD(P^{2},u)$.

\smallskip

Afin de montrer que $\PGCD(P,u,v)=Q$, il suffit de montrer que toutes les racines de $Q$ sont des racines du $\PGCD(P,u,v)$ avec la même multiplicité. L'égalité (\ref{eqa2}), nous informe que si $\PGCD(P^{2},u)=Q^{2}$ alors
$Q$ divise $v$. Soit $a$ une racine de $Q$ d'ordre $k$, c'est-à-dire $a$ est une racine de $\PGCD(P^{2},u)$
d'ordre $2k$. En d'autres termes, $a$ est une racine d'ordre au moins $k$ de $u$, $P$ et $v$. Si $a$ est une racine d'ordre au moins $k+1$ de $P$ et $v$. L'égalité (\ref{eqa2}) et le fait que $\PGCD(u,v,w)=1$ impliquent que $a$ est une racine $u$ d'ordre $2k+2$; ceci entraine que $a$ est une racine d'ordre $2k+2$ de $\PGCD(P^{2},u)$, ce qui est une contradiction. Donc $a$ est une racine de $Q$ d'ordre $k$ et $\PGCD(P,u,v)=Q$.

\smallskip

Pour la réciproque, si $\PGCD(P,u,v)=Q$, l'égalité (\ref{eqa2}) devient
\begin{align}\label{eqa3}
Q^{2}(x)[P_{Q}^{2}(x)h'-v_{Q}^{2}(x)]&=u(x)w(x)\;.
\end{align}
De égalité (\ref{eqa3}) et du fait que $\PGCD(u,v,w)=1$, on a que $Q^{2}$ divise $u$, par conséquent $Q^{2}$ divise $\PGCD(P^{2},u)$. Si $Q^{2}$ divise $\PGCD(P^{2},u)$, mais $\PGCD(P^{2},u)\neq Q^{2}$, c'est-à-dire $\PGCD(P,u,v)\neq Q$, ceci est une contradiction. On conclut que si $\PGCD(P,u,v)=Q$ alors $\PGCD(P^{2},u)=Q^{2}$.

\smallskip

Pour prouver le second point de la proposition on refait les mêmes étapes et conclusions en remplaçant le polynôme $u$ par le polynôme $w$. 
\end{proof}
\begin{rem}
Soit $A(x)\in\left( \begin{array}{cc}v(x)&u(x)\\w(x)&-v(x)\end{array}\right)\in M_{g,g}(h)$, et soit $Q$ le
polynôme unitaire tel que $Q^2=\PGCD(P^{2},u)$, alors $Q\in \mathbb{C}[x]_{h}$.
\end{rem}
\smallskip

\begin{defi}\label{phi}\upshape
On note par $\Phi$ l'application entre $M_{g,g}(h)$ et $\Jac_{\mathfrak{m}}(C')$ définie de la manière suivante: 
$$\begin{array}{cccl}
 \Phi:&M_{g,g}(h)&\longrightarrow&\Jac_{\mathfrak{m}}(C')\\
 &\left(\begin{array}{cc}
 v(x)&u(x)\\w(x)&-v(x)\end{array}\right)
 &\longmapsto&\theta\left(\left(\frac{R(x)(P(x)z+v(x))}{u(x)}+1\right)_{0}\right)\;,
\end{array} $$
où $R(x)=\prod\limits_{i=1}^{k}(x-a_{i})^{\ell_{i}+1}$\;.
\end{defi}
\begin{prop}
L'application $\Phi$ de $M_{g,g}(h)$ vers $\Jac_{\mathfrak{m}}(C') $ est bien définie.
\end{prop}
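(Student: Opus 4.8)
Since the matrix $A$ entirely determines $u,v,w$ and $R$ is fixed, the expression $g:=\frac{R(x)(P(x)z+v(x))}{u(x)}+1$ is a well-determined rational function on $C'$, non-constant (it has a pole at $\infty$), so that $(g)_0$ is a genuine effective divisor. The only point to check for well-definedness is therefore that $(g)_0$ is prime to $\supp(\mathfrak m)$, this being exactly the condition under which the Serre morphism $\theta$ may be applied, so that $\Phi(A)=\theta\big((g)_0\big)\in\Jac_{\mathfrak m}(C')$ makes sense; independence from any auxiliary datum is then automatic, every ingredient of $g$ being read off from $A$. Writing $g-1=\frac{R(Pz+v)}{u}$, it suffices to prove that $g-1$ vanishes at every point of $\supp(\mathfrak m)$: then $g\equiv 1$ there, $g$ has no zero on $\supp(\mathfrak m)$, and $(g)_0\in\Div_{\mathfrak m}(C')$.

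The plan is to verify this pointwise, computing $v_p(g-1)=v_p(R)+v_p(Pz+v)-v_p(u)$ for each $p\in\supp(\mathfrak m)$, and treating separately the ramification points $(a_i,0)$ with $1\le i\le d$ (where $z$ is a uniformizer and $v_p(x-a_i)=2$) and the conjugate pairs $(a_i,\pm b_i)$ with $d+1\le i\le k$ (where $x-a_i$ is a uniformizer). Two inputs drive the computation. First, the characteristic relation $P^2h'-uw=v^2$, rewritten on $C'$ as $(Pz+v)(Pz-v)=uw$, yields the valuation of $Pz+v$ from those of $u$ and $w$. Second, Lemma \ref{Deg} governs the only dangerous contribution, namely the pole arising from the zeros of $u$ that meet $P$: it shows that such common zeros occur to even order in $\gcd(P^2,u)=Q^2$, with $Q=\gcd(P,u,v)$, and hence are zeros of $v$ as well, so that $Q$ may be factored out of both $P$ and $v$, lowering the order of the pole of $1/u$. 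The exponent $\ell_i+1$ in $R=\prod(x-a_i)^{\ell_i+1}$ is tailored precisely so that, after this cancellation, the numerator still vanishes at $p$ to at least the order prescribed by $\mathfrak m$; indeed, away from the shared roots (i.e. when $u(a_i)\neq0$) the factor $(x-a_i)^{\ell_i+1}$ alone already forces $v_p(g-1)\ge \mathfrak m|_p$, and the claim is immediate.

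The hard part will be exactly this valuation bookkeeping at the points where $u$ and $P$ share a root, i.e. where $Q=\gcd(P,u,v)\neq 1$: there the pole of $1/u$ is most severe, and one must show that the combined vanishing of the factor $R$ and of $Pz+v$ — the latter forced by $Q\mid v$ through Lemma \ref{Deg} — dominates it, so that $g-1$ remains regular and vanishing, whence $g=1$ and in particular $g\neq0$ at $p$. Assembling the branch-point and conjugate-pair cases then gives $g\equiv 1$ on $\supp(\mathfrak m)$, so $(g)_0$ is prime to $\supp(\mathfrak m)$, $\theta\big((g)_0\big)$ is defined, and $\Phi$ is well-defined.
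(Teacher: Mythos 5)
Your reduction is exactly the paper's: well-definedness of $\Phi$ amounts to checking that the divisor of zeros of $F(x,z)=\frac{R(x)(P(x)z+v(x))}{u(x)}+1$ avoids $\supp(\mathfrak{m})$, and you propose to get this by showing $F-1$ vanishes on $\supp(\mathfrak{m})$, so that $F\equiv 1\neq 0$ there. The two inputs you name --- the relation $(Pz+v)(Pz-v)=uw$ coming from $\det A$, and Lemma \ref{Deg} to control the common zeros of $u$ and $P^2$ --- are also the ones the paper uses. So the strategy is sound and matches the source.

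The difference is in how the decisive step is discharged, and here your proposal stops at announcing it. You plan a pointwise valuation count $v_p(F-1)=v_p(R)+v_p(Pz+v)-v_p(u)$ at each $p\in\supp(\mathfrak{m})$, and you explicitly label the case where $u$ and $P$ share a root as ``the hard part'' without carrying it out; as written, the proof is a program, not a verification. Note in particular that Lemma \ref{Deg} alone does not suffice at such a point: one must also invoke the determinant relation to rule out configurations where the numerator's vanishing fails to dominate the pole of $1/u$ (e.g.\ to exclude $v$ vanishing to order exactly $\ell_i$ while $u$ vanishes to order $2\ell_i+1$ at $a_i$), so the bookkeeping you defer is genuinely where the content lies. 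The paper avoids the case analysis by a single global cancellation: setting $Q=\PGCD(P,u,v)$, Lemma \ref{Deg} gives $Q^2\mid u$ while $Q$ divides $P$, $v$ and $R$, so
$$F=\frac{R_{Q}(x)\,(P_{Q}(x)z+v_{Q}(x))}{u_{Q^{2}}(x)}+1,\qquad R_Q=\prod_{i=1}^{k}(x-a_i)\,P_Q\,,$$
and the extra factor $\prod_i(x-a_i)$ surviving in $R_Q$ kills the numerator at every point of $\supp(\mathfrak{m})$ after the pole has been cancelled. If you intend your pointwise version to stand as a proof, you need to actually run the valuation count in the shared-root case (separating the ramification points $(a_i,0)$, $i\leqslant d$, where $v_p(x-a_i)=2$, from the pairs $(a_i,\pm b_i)$), combining both inputs; otherwise the argument has a gap precisely at the point you yourself identify as critical.
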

\begin{proof}[Preuve]
 Pour montrer que l'application $\Phi$ est bien définie, il suffit de montrer que le diviseur des zéros de
 $F(x,z)=\frac{R(x)(P(x)z+v(x))}{u(x)}+1$ est étranger à $\supp(\mathfrak{m})$.

 \smallskip
 
Soit $A(x)=\left(\begin{array}{cc}v(x)&{u(x)}\\w(x)&-v(x)\end{array}\right)\in M_{g,g}(h)$, et soit $Q$ le polynôme égal au $\PGCD(P,u,v)$
de degré $j$. D'après le lemme \ref{Deg}, $Q^{2}$ divise le polynôme $u$, de plus 
$Q$ divise $R$, alors
$$F(x,z)=\frac{R_{Q}(x)(P_{Q}(x)z+v_{Q}(x))}{u_{Q^{2}}(x)}+1\;.$$
On note par $\{x_i\}_{1\leqslant i\leqslant g-2j}$ les racines du polynôme $u_{Q^{2}}(x)$:
$$u_{Q^2}(x)=\prod\limits_{i=1}^{g-2j}(x-x_{i}).$$ 
Le diviseur principal de la fonction
$F(x,z)$
est donné par
\begin{equation}\label{eqd}\left(F(x,z)\right)=-\sum\limits_{i=1}^{g-2j}\left(x_{i},\frac{v_{Q}(x_{i})}{P_{Q}(x_{i})}\right)-(2(n+k)+1)\infty+D\;,\end{equation}
où $D$ est un diviseur effectif de degré $g+2(n+k-j)+1$. Afin d'expliciter l'égalité (\ref{eqd}) , on détermine d'abord les zéros de $u_{Q^2}$ sur $C'$:
$$
 \left(x_{i},\frac{v_{Q}(x_{i})}{P_{Q}(x_{i})}\right)\qquad\text{et}\qquad
 \left(x_{i},-\frac{v_{Q}(x_{i})}{P_{Q}(x_{i})}\right)
$$
cependant les points $\{ \left(x_{i},-\frac{v_{Q}(x_{i})}{P_{Q}(x_{i})}\right) \}_{{1}\leqslant i \leqslant{g-2j}}$, alors il ne reste des zeros de $u_{Q^2} $ que les points $\{\left(x_{i},\frac{v_{Q}(x_{i})}{P_{Q}(x_{i})}\right) \}_{{1}\leqslant i \leqslant{g-2j}}$ qui sont des pôles de la fonction $F$.
Afin d'établir tous les pôles de $F$, on attribue $t$ comme paramètre local au voisinage de $\infty$, de telle sorte que $x=1/t^2$. Alors au voisinage de $\infty$, on a:
\begin{equation*}
 R(x)\sim \frac1{t^{2(k+n)}}\;,\quad P(x)\sim \frac1{t^{2n}}\;,\quad z\sim \frac1{t^{2g'+1}}\;,\quad {u(x)}\sim \frac1{t^{2(g'+n)}}. 
\end{equation*}%
Après substitution dans la fonction $F$ on trouve qu'au voisinage de $\infty$:
\begin{equation*}
F(x,z)\sim \frac1{t^{2(k+n)+1}}\;.
\end{equation*}%
Il résulte que les pôles de $ F$ sont $\left[(2(k+n)+1)\infty+\sum\limits_{i=1}^{g-2j} \left(x_{i},\frac{v_{Q}(x_{i})}{P_{Q}(x_{i})}\right)\right] $.
Du fait que le degré de tout diviseur principal est zéro, le diviseur $D$ est effectif et de degré $ g+2(n+k=j)+1$.

\smallskip

Le diviseur $D$ est étranger à $\supp{\mathfrak{m}}$, car la fraction $F$ ne s'annule pas sur le support de ${\mathfrak{m}} $, étant donné que $R_Q= \prod\limits_{i=1}^{k}(x-a_{i})P_Q$ s'annule en chaque $a_i$.
\end{proof}
\begin{rem}\label{jacrem}
Soit $M_{g,g}^0(h)$ l'ensemble des matrices $A(x)=\left(\begin{array}{cc}v(x)&{u(x)}\\w(x)&-v(x)\end{array}\right)\in M_{g,g}(h)$, tel que
${\PGCD(P,u,v)=1}$ est un ouvert dense de $M_{g,g}(h)$ où:
\begin{equation*}
 \frac{R(x)(P(x)z+v(x))}{u(x)}\equiv0\mod[\mathfrak{m}]
\end{equation*}%
Par consequent, la fonction $F$ nous permet d'avoir une $\mathfrak{m}$-équivalence entre ses zéros et ses pôles. Or, comme nous l'avons vu, les
pôles sont simples à écrire, donc sur l'ouvert $M_{g,g}^0(h)$ le morphisme $\Phi$ est le suivant:
\begin{equation}\label{div_}
 \left(\begin{array}{cc}
 v(x)&u(x)\\
 w(x)&-v(x)
 \end{array}\right)
 \longmapsto\theta\left(\sum\limits_{i=1}^{g}\left(x_{i},\frac{v(x_{i})} {P(x_{i})}\right)\right)\;.
\end{equation}%
le morphisme $\Phi$ généralise donc le morphisme de Mumford.
\end{rem} 
\begin{rem}
L'application $\Phi:M_{g,g}(h)\longrightarrow \Jac_{\mathfrak{m}}(C')$ est continue. 
%
\end{rem}

\smallskip

Afin de s'assurer que le morphisme $\Phi$ est le morphisme idéal pour décrire les strates du systeme de Mumford, il suffit de démontrer que $\Phi$ linéarise les champs de vecteurs du système de Mumford, à cet égard nous montrons que
ces champs deviennent des champs invariants par translation sur la jacobienne généralisée.\\
 
 Rappelons la relation entre les formes différentielles invariantes sur la jacobienne $\Jac_{\mathfrak{m}}(C')$ et les formes
différentielles de la courbe $C'$:
\begin{prop}
L'application $\theta^{*}$ est une bijection entre l'ensemble des formes différentielles invariantes de
$\Jac_{\mathfrak{m}}(C')$ et l'ensemble des formes différentielles rationnelles $\alpha$ de la courbe $C'$ telle que
$(\alpha)\geqslant -\mathfrak{m}$.
\end{prop}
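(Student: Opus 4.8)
Le plan est d'interpréter $\theta^{*}$ comme le tiré en arrière des formes invariantes le long du plongement d'Abel--Jacobi $\psi\colon P\longmapsto\theta\big((P)-(P_0)\big)$ déduit du morphisme de Serre (\ref{re}), puis de montrer successivement que $\theta^{*}$ est injective, que son espace de départ et son espace d'arrivée $I_{\mathfrak{m}}(0)=\{\alpha\mid(\alpha)\geqslant-\mathfrak{m}\}$ ont tous deux pour dimension $\pi$, et enfin qu'un morphisme linéaire injectif entre deux espaces de même dimension finie est un isomorphisme. Comme $\psi$ est régulier sur $C'-\supp(\mathfrak{m})$, le tiré en arrière $\psi^{*}\bar\omega$ d'une forme invariante $\bar\omega$ est bien une forme rationnelle sur $C'$; tout l'enjeu sera de contrôler ses pôles aux points de $\supp(\mathfrak{m})$.

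\medskip

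D'abord le comptage des dimensions. Les formes différentielles invariantes par translation sur un groupe algébrique commutatif s'identifient canoniquement à l'espace cotangent en l'élément neutre, de sorte que leur dimension vaut $\dim\Jac_{\mathfrak{m}}(C')=\pi$. Pour l'espace d'arrivée, j'applique le théorème de Riemann--Roch généralisé (\ref{thm:RR}) au diviseur $D=0$: toute fonction $f$ avec $(f)\geqslant0$ sur la courbe projective lisse $C'$ étant constante, on a $L_{\mathfrak{m}}(0)=\mathbb{C}$ et $l_{\mathfrak{m}}(0)=1$. Le théorème donne alors $1-i_{\mathfrak{m}}(0)=1-\pi$, soit
$$i_{\mathfrak{m}}(0)=\pi=g'+n=g\,,$$
la dernière égalité résultant du lemme \ref{delta}. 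Les deux espaces ont donc la même dimension $\pi$.

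\medskip

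Ensuite l'injectivité et le contrôle des pôles. Pour l'injectivité, j'utiliserai que $\psi$ est génériquement une immersion: une forme invariante $\bar\omega$ telle que $\psi^{*}\bar\omega=0$ s'annulerait en un point générique de son image, et comme elle est déterminée par sa valeur en un seul point via l'invariance par translation, elle serait identiquement nulle. Pour majorer les pôles de $\psi^{*}\bar\omega$ par $\mathfrak{m}$, j'exploiterai la structure d'extension (\ref{suite})
$$0\longrightarrow\mathbb{C}^{*k-d}\times\mathbb{C}^{n-k+d}\longrightarrow\Jac_\mathfrak{m}(C')\longrightarrow\Jac(C')\longrightarrow0\,.$$
Les formes invariantes relevées de $\Jac(C')$ par la projection $\tau$ fournissent les $g'$ formes holomorphes (de première espèce) sur $C'$; celles issues du facteur torique $\mathbb{C}^{*k-d}$ se tirent en arrière en différentielles logarithmiques $dt/t$, à pôles simples et résidus opposés aux points $(a_i,\pm b_i)$ pour $i>d$; celles provenant du facteur additif $\mathbb{C}^{n-k+d}$ donnent des différentielles à pôles d'ordre supérieur concentrés sur $\supp(\mathfrak{m})$. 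Dans chaque cas l'analyse locale devra montrer que l'ordre du pôle en un point $p$ reste borné par la multiplicité $\mathfrak{m}\vert_p$, ce qui place l'image de $\theta^{*}$ dans $I_{\mathfrak{m}}(0)$.

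\medskip

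La bijectivité s'ensuivra: $\theta^{*}$ est un morphisme linéaire injectif entre deux espaces de dimension $\pi$, donc un isomorphisme. L'obstacle principal est précisément le dernier point ci-dessus: établir, par un calcul en coordonnées locales de $\Jac_{\mathfrak{m}}(C')$ au voisinage du neutre adaptées aux facteurs $\mathbb{C}^{*}$ et $\mathbb{C}$, que $\psi$ « explose » dans les directions de $L_{\mathfrak{m}}$ à une vitesse exactement dictée par le module, de façon que le tiré en arrière d'une forme invariante n'acquière jamais en $p\in\supp(\mathfrak{m})$ un pôle d'ordre strictement supérieur à $\mathfrak{m}\vert_p$.
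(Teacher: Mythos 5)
Le texte de l'article ne d\'emontre pas cette proposition~: elle y est simplement rappel\'ee comme un r\'esultat de Serre \cite{Ser}, si bien qu'il n'y a pas de preuve interne \`a laquelle comparer votre tentative. Votre plan est n\'eanmoins, pour l'essentiel, celui de Serre lui-m\^eme (chapitre V de \emph{Groupes alg\'ebriques et corps de classes})~: le comptage des dimensions est correct et complet --- les formes invariantes d'un groupe alg\'ebrique commutatif de dimension $\pi$ forment un espace de dimension $\pi$, et le th\'eor\`eme de Riemann--Roch g\'en\'eralis\'e appliqu\'e \`a $D=0$ donne bien $i_{\mathfrak{m}}(0)=\pi$ puisque $l_{\mathfrak{m}}(0)=1$ (les seules fonctions r\'eguli\`eres partout sur la courbe projective lisse $C'$ sont les constantes, qui v\'erifient trivialement la condition de congruence), et $I_{\mathfrak{m}}(0)$ est exactement l'espace $\{\alpha\mid(\alpha)\geqslant-\mathfrak{m}\}$ vis\'e.

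Deux points restent toutefois en de\c{c}\`a d'une d\'emonstration. Pour l'injectivit\'e, dire qu'une forme invariante \emph{d\'etermin\'ee par sa valeur en un point} s'annule parce qu'elle s'annule sur l'image de $\psi$ ne suffit pas~: cette image est une courbe dans un groupe de dimension $\pi$, et l'annulation de $\psi^{*}\bar\omega$ ne donne que l'annulation du covecteur $\bar\omega_{e}$ sur les translat\'es \`a l'origine des directions tangentes de cette courbe; il faut encore que ces directions engendrent tout l'espace tangent en $e$, ce qui d\'ecoule de la structure de groupe birationnel sur ${C'}^{(\pi)}$ (sinon $\psi(C')$ engendrerait un sous-groupe strict de $\Jac_{\mathfrak{m}}(C')$, de dimension $<\pi$). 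Surtout, l'\'etape que vous identifiez vous-m\^eme comme l'obstacle principal --- montrer que $\psi^{*}\bar\omega$ a en chaque $p\in\supp(\mathfrak{m})$ un p\^ole d'ordre au plus $\mathfrak{m}\vert_{p}$ --- n'est pas ex\'ecut\'ee; or sans elle l'argument dimensionnel ne conclut pas, puisqu'on ne sait pas encore que $\theta^{*}$ envoie son espace de d\'epart \emph{dans} $I_{\mathfrak{m}}(0)$. Chez Serre, ce point est \'etabli par un calcul local explicite dans les cartes de l'extension (\ref{suite}). Tel quel, votre texte est donc une esquisse correcte de la bonne strat\'egie, mais pas encore une preuve compl\`ete.
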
 

\smallskip

Nous commençons avec le plus simple des champs de vecteurs des systèmes de Mumford, le champ de vecteurs
$D_{g-1}$ qui, comme les autres champs de vecteurs $D_i$ est tangent à la strate $M_{g,g}(h)$.

\begin{prop}
Le champ de vecteurs $D_{g-1}$, restreint à la strate $M_{g,g}(h)$ est en dualité avec une forme différentielle
invariante de $\Jac_\mathfrak{m}({C'})$. Il est donc envoyé par $\Phi$ sur un champ de vecteurs invariant.
\end{prop}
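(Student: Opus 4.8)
The plan is to conjugate $D_{g-1}$ through $\Phi$ on the dense open set $M_{g,g}^0(h)$ of Remark \ref{jacrem}, where $\Phi(A)=\theta\left(\sum_{i=1}^g\left(x_i,\frac{v(x_i)}{P(x_i)}\right)\right)$ with $x_1,\dots,x_g$ the roots of $u$, and to show that the contraction of $D_{g-1}$ with \emph{every} invariant differential form of $\Jac_{\mathfrak m}(C')$ is a constant function on $M_{g,g}^0(h)$. Since a vector field on an algebraic group that pairs to a constant against each invariant $1$-form is necessarily translation-invariant, this identifies $\Phi_*D_{g-1}$ with the invariant field dual to a fixed invariant form; invariance then extends from the dense open set to all of $M_{g,g}(h)$ by the continuity of $\Phi$.

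First I would compute $D_{g-1}$ explicitly. For $i=g-1$ one has $x^{i+1}=x^g$, and the polynomial part $\left[A(x)/x^g\right]_+$ equals $\left(\begin{smallmatrix}0&1\\x+w_g&0\end{smallmatrix}\right)$; writing $c(x)=x+w_g-u_{g-1}$ this gives $D_{g-1}\vert_A=\left[A(x),\left(\begin{smallmatrix}0&1\\c(x)&0\end{smallmatrix}\right)\right]$, hence $\dot u=2v$, $\dot v=c\,u-w$ and $\dot w=-2c\,v$. On the dense subset where $u$ is separable I would differentiate $u(t,x_i(t))=0$ and use $\dot u=2v$ to obtain the evolution of the abscissas $\dot x_i=-2v(x_i)/u'(x_i)$, which is the only feature of $D_{g-1}$ that the rest of the argument needs.

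Next I would pair with forms. By the Proposition preceding this statement, $\theta^{*}$ identifies the invariant forms of $\Jac_{\mathfrak m}(C')$ with the rational differentials $\alpha$ of $C'$ satisfying $(\alpha)\geq-\mathfrak{m}$; on the hyperelliptic curve it suffices to treat the anti-invariant forms $\alpha=\frac{r(x)\,dx}{z}$, which already span this space (their number $g'+n=g$ matches $\dim\Jac_{\mathfrak m}(C')$). Because the pullback of a translation-invariant form under the Abel--Jacobi-type map $\theta$ is the symmetric sum $\sum_i\alpha\vert_{p_i}$, one gets $(\Phi^{*}\omega)(D_{g-1})=\sum_i\frac{r(x_i)}{z_i}\dot x_i$ with $z_i=v(x_i)/P(x_i)$; substituting $\dot x_i$ turns this into $-2\sum_{i=1}^g\frac{r(x_i)P(x_i)}{u'(x_i)}$, that is, $-2$ times the sum of the finite residues of $\frac{r(x)P(x)}{u(x)}\,dx$.

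The hard part will be showing that this residue sum does not depend on the point $A$, and here the choice of module is decisive. The constraint $(\alpha)\geq-\mathfrak{m}$ means that $r$ has a pole of order at most $\ell_i$ at each $a_i$ (read in the local parameter $z$, a pole of order at most $2\ell_i$ at a branch point), exactly the vanishing order of $P$ there, so that $G(x):=r(x)P(x)$ is regular at every point of $\supp(\mathfrak{m})$; moreover holomorphy of $\alpha$ at $\infty\notin\supp(\mathfrak{m})$ forces $\deg G\leq g-1$. Hence $\frac{G(x)}{u(x)}\,dx$ has poles only at the $x_i$ and at $\infty$, and the residue theorem on $\mathbb{P}^1$ yields $\sum_i\frac{G(x_i)}{u'(x_i)}=-\Res_{\infty}\frac{G(x)}{u(x)}\,dx$, a number depending only on the top coefficients of $G$ and of the monic $u$, hence only on $\alpha$. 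This proves $(\Phi^{*}\omega)(D_{g-1})$ is constant for every invariant $\omega$; taking $\alpha=\frac{x^{g'-1}dx}{z}$ gives the nonzero constant $-2$, exhibiting the invariant form with which $D_{g-1}$ is in duality, and the invariance of $\Phi_*D_{g-1}$ follows.
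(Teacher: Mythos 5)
Votre démonstration est correcte et suit essentiellement la même route que celle de l'article : restriction à l'ouvert dense $M^0_{g,g}(h)$, dérivation de $\dot x_i=-2v(x_i)/u'(x_i)$ à partir de $\dot u=2v$, puis appariement avec la base des formes invariantes $q(x)\,dx/(P(x)z)$, $\deg q\leqslant g-1$, pour constater que chaque appariement est une constante, nulle sauf pour $x^{g'-1}dx/z$. La seule différence est d'ordre rédactionnel : là où l'article obtient les identités $\sum_i G(x_i)/u'(x_i)=0$ (pour $\deg G\leqslant g-2$) en multipliant l'équation (\ref{s}) par les diviseurs successifs de $P$ et en sommant, vous les regroupez d'un seul coup via le théorème des résidus appliqué à $G(x)\,dx/u(x)$ sur $\mathbb{P}^1$, ce qui rend en outre explicite le rôle de la borne $\deg G\leqslant g-1$ issue de la condition $(\alpha)\geqslant-\mathfrak{m}$.
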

\begin{proof}[Preuve]
Il suffit de restreindre la preuve à l'ouvert $M_{g,g}(h)^0$ dense de $M_{g,g}(h)$. Soit $A_0= \left(\begin{array}{cc}
 v_0&u_0\\
 w_0&-v_0
 \end{array}\right) \in M_{g,g}(h)^0$ où 
 $u^0(x)=\prod\limits_{i=1}^{g}(x-x^0_i)$ avec les $ x^0_i \in \mathbb{C}$. On sait que
\begin{align}
D_{g-1}\mid_{A^0}(u^0(x))=2v^0(x) \label{**}
\end{align}
Puisqu'il n'y a pas de confusion possible, et afin de simplifier les formules nous omettrons de mettre l'indice
$0$ et écrirons l'équation (\ref{**}) de la manière suivante
$$\displaystyle{-\sum\limits_{i=1}^{g} \prod\limits_{k \neq i}(x-x_k)\frac{dx_i}{dt}=2v(x).}$$
En évaluant cette dernière équation en $x_i$ avec $ 1 \leqslant i \leqslant g+n $, on obtient \\ 
$$\displaystyle{- \prod\limits_{k \neq i}(x_i-x_k)\frac{dx_i}{dt}=2v(x_i)}\;.$$ \\
Comme $v(x_i)=P(x_i)z_i$ où $(x_i,z_i) \in C'$ on a 
\begin{align}
- \prod\limits_{k \neq i}(x_i-x_k)\frac{dx_i}{dt} & =2P(x_i)z_i, \nonumber\\
\frac{dx_i}{P(x_i)z_i}& =-2\frac{dt}{ \prod\limits_{k \neq i}(x_i-x_k)} \label{s},
\end{align}
En sommant les équations (\ref{s}) pour tout $i$ on aura 
$$\sum\limits_{i=1}^{g} \frac{dx_i}{P(x_i)z_i}=0.$$
Rappelons que le polynôme $P(x)=\prod\limits_{j=1}^ k (x-a_j)^{l_j}$ est le diviseur quadratique maximal de $h$ et il est de degré $n$. En multipliant l'équation (\ref{s}) par les diviseurs du polynôme $P(x)$ suivant: $[(x-a_1), \dots, (x-a_1)^{l_1}, (x-a_1)^{l_1}(x-a_2), \dots, P(x)]$ et puis en évaluant au point $x_i$, on obtient: 
\begin{align}\label{sum}\left\lbrace\begin{array}{l}
\displaystyle{ \frac{dx_i}{(x_i-a_1)^{l_1-1}\prod\limits_{j\geqslant 2}^ k (x_i-a_j)^{\ell_j}z_i}}=-2(x_{i}-a_{1})\frac{dt}{ \prod\limits_{k \neq i}(x_i-x_k)}\;, \\ 
\displaystyle{ \frac{dx_i}{(x_i-a_1)^{\ell_1-2}\prod\limits_{j\geqslant 2}^ k (x_i-a_j)^{\ell_j}z_i}}=-2(x_{i}-a_{1})^{2}\frac{dt}{ \prod\limits_{k \neq i}(x_i-x_k)}\;, \\ 
\vdots \\ 
\displaystyle{ \frac{dx_i}{\prod\limits_{j\geqslant 2}^ k (x_i-a_j)^{\ell_j}z_i}}=-2(x_{i}-a_{1})^{\ell_{i}}\frac{dt}{ \prod\limits_{k \neq i}(x_i-x_k)}\;, \\ 
\displaystyle{ \frac{dx_i}{(x_i-a_2)^{\ell_2-1}\prod\limits_{j\geqslant 3}^ k (x_i-a_j)^{\ell_j}z_i}}=-2(x_{i}-a_{2})(x_{i}-a_{1})^{\ell_{i}}\frac{dt}{ \prod\limits_{k \neq i}(x_i-x_k)}\;, \\ 
\vdots \\
\displaystyle{ \frac{dx_i}{z_i}}=P(a_{i})\frac{dt}{ \prod\limits_{k \neq i}(x_i-x_k)}. \\ 
\end{array} \right. \end{align}
En sommant chaque équation de (\ref{sum}), on obtient
$$\left\lbrace\begin{array}{l}
\displaystyle{\sum\limits_{i=1}^{g+n} \frac{dx_i}{(x_i-a_1)^{\ell_1-1}\prod\limits_{j\geqslant 2}^ k (x_i-a_j)^{l_j}z_i}}=0, \\ 
\displaystyle{ \sum\limits_{i=1}^{g+n} \frac{dx_i}{(x_i-a_1)^{\ell_1-2}\prod\limits_{j\geqslant 2}^ k (x_i-a_j)^{l_j}z_i}}=0, \\ 
\vdots \\ 
\displaystyle{ \sum\limits_{i=1}^{g+n} \frac{dx_i}{\prod\limits_{j\geqslant 2}^ k (x_i-a_j)^{l_j}z_i}}=0, \\ 
\displaystyle{ \sum\limits_{i=1}^{g+n} \frac{dx_i}{(x_i-a_2)^{\ell_2-1}\prod\limits_{j\geqslant 3}^ k (x_i-a_j)^{l_j}z_i}}=0, \\ 
\vdots \\
\displaystyle{ \sum\limits_{i=1}^{g+n} \frac{dx_i}{z_i}}=0. \\ 
\end{array} \right. $$
Refaisons les mêmes étapes que précédemment en multipliant l'équation (\ref{s}) par $x^{{j}} P(x)$, pour $1 \leqslant {{j}} \leqslant g'-1$, puis évaluons l'équation obtenue au point $x_i$, et enfin, additionnons les équations pour tout $ 1\leqslant i \leqslant g$, on aura
$$\left\lbrace\begin{array}{l}
\displaystyle{\sum\limits_{i=1}^{g} \frac{x_idx_i}{z_i}}=0, \\ 
\displaystyle{\sum\limits_{i=1}^{g} \frac{x^2_idx_i}{z_i}}=0, \\ 
\vdots\\
\displaystyle{\sum\limits_{i=1}^{g} \frac{x^{g'-2}_idx_i}{z_i}}=0, \\
\displaystyle{\sum\limits_{i=1}^{g} \frac{x^{g'-1}_idx_i}{z_i}}=(-1)^{g+n-1}2dt. \\
\end{array} \right. $$
D'après Serre \cite{Ser},
$$ \left\langle \displaystyle{\frac{dx}{P(x)z}},\displaystyle{\frac{dx}{(x-{a_1})^{l_1-1}\prod\limits_{j\geqslant
 2}^k (x-a_j)^{l_j}z}},\displaystyle{\frac{dx}{(x-{a_1})^{l_1-2}\prod\limits_{j\geqslant 2}^ k
 (x-a_j)^{l_j}z}}, \dots,\displaystyle{\frac{dx}{z}},\displaystyle{\frac{xdx}{
 z}},\displaystyle{\frac{x^{g-1}dx}{ z}} \right\rangle$$
est l'espace vectoriel de formes différentielles rationnelles invariantes sur $C'$, qui définit les formes
différentielles invariantes sur $\Jac_\mathfrak{m}({C'})$. Le calcul ci-dessus montre alors que le champ de
vecteurs $D_{g-1}$ est annulé par toutes ces formes différentielles sauf par une forme pour laquelle il est en dualité. Cela prouve que le champ $D_{g-1}$ est envoyé par $\Phi$ sur un champs de vecteurs invariant par
translation.
\end{proof}

\begin{prop}\label{prp:lin}
Les champs de vecteurs $D^{g}_{g-1},\cdots, D^{g}_{0}$, restreints à la strate $M_{g,g}(h)$ correspondent à des
champs invariants sur la jacobienne généralisée $\Jac_\mathfrak{m}({C'})$.
\end{prop}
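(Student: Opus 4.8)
Le plan est de reprendre, pour chaque champ $D_i$ de (\ref{c'}), l'argument men\'e ci-dessus pour $D_{g-1}$. Je me restreins d'abord \`a l'ouvert dense $M_{g,g}^0(h)$, sur lequel $u(x)=\prod_{\ell=1}^g(x-x_\ell)$ a ses $g$ racines distinctes. Le calcul de l'entr\'ee $(1,2)$ du crochet de Lax (\ref{c'}) donne
$$D_i(u)=2v\left[\tfrac{u}{x^{i+1}}\right]_+-2u\left[\tfrac{v}{x^{i+1}}\right]_+\,,$$
d'o\`u, en \'evaluant au point $x_\ell$ et en utilisant $u(x_\ell)=0$, $v(x_\ell)=P(x_\ell)z_\ell$ ainsi que $D_i(u)(x_\ell)=-u'(x_\ell)\frac{dx_\ell}{dt}$, l'analogue de (\ref{s}):
$$\frac{dx_\ell}{P(x_\ell)z_\ell}=-2\,\beta_i(x_\ell)\,\frac{dt}{u'(x_\ell)}\,,\qquad \beta_i:=\left[\tfrac{u}{x^{i+1}}\right]_+\,.$$
Pour $i=g-1$ on a $\beta_{g-1}=1$ et l'on retrouve exactement (\ref{s}); la seule nouveaut\'e pour $i<g-1$ est le facteur polynomial $\beta_i$, de degr\'e $g-i-1$.

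J'apparie ensuite $D_i$ avec la base des formes diff\'erentielles invariantes rappel\'ee dans la preuve pr\'ec\'edente, chaque telle forme s'\'ecrivant $\omega=\psi(x)\,\tfrac{dx}{z}$. Gr\^ace \`a la relation ci-dessus et \`a $u'(x_\ell)=\prod_{k\neq\ell}(x_\ell-x_k)$, l'appariement se ram\`ene \`a une somme sym\'etrique sur les racines de $u$,
$$\langle D_i,\omega\rangle=-2\sum_{\ell=1}^g\frac{\psi(x_\ell)P(x_\ell)\beta_i(x_\ell)}{u'(x_\ell)}\,,$$
laquelle, par le th\'eor\`eme des r\'esidus sur $\mathbb{P}^1$, \'egale $2\Res_{x=\infty}\tfrac{\psi(x)P(x)\beta_i(x)}{u(x)}\,dx$. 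L'\'etape alg\'ebrique d\'ecisive est la d\'ecomposition $\beta_i=\tfrac{u}{x^{i+1}}-\gamma_i$, o\`u $\gamma_i:=[u/x^{i+1}]_-$ ne comporte que des puissances strictement n\'egatives de $x$; l'int\'egrande se scinde alors en $\tfrac{\psi(x)P(x)}{x^{i+1}}-\tfrac{\psi(x)P(x)\gamma_i}{u}$.

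Le point central, et le seul endroit d\'elicat, consiste \`a montrer que le second terme n'apporte aucun r\'esidu \`a l'infini, de sorte que l'appariement se r\'eduit \`a un coefficient de $P$ et est donc \emph{constant} sur la fibre. Pour les formes holomorphes $\psi(x)=x^j$ ($0\le j\le g'-1$), le num\'erateur $x^jP\gamma_i$ est de degr\'e au plus $j+n-1$, donc $\tfrac{x^jP\gamma_i}{u}$ est de degr\'e au plus $j-g'-1\le-2$ et ne poss\`ede pas de terme en $x^{-1}$; le r\'esidu provient alors uniquement de $x^{j-i-1}P$, et l'on trouve $\langle D_i,\tfrac{x^j dx}{z}\rangle=-2\,p_{i-j}$, o\`u $p_m$ est le coefficient de $x^m$ dans $P$. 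Le m\^eme comptage de degr\'es r\`egle les formes polaires $\psi(x)=\Pi(x)/P(x)$, avec $\Pi\mid P$ et $\deg\Pi<n$: ici encore $\tfrac{\Pi\gamma_i}{u}$ est de degr\'e au plus $-2$ et l'appariement vaut $-2\,\pi_i$, un coefficient de $\Pi$. Je m'attends \`a ce que cette estimation uniforme de degr\'e soit le c\oe ur de la preuve; une fois celle-ci acquise, chaque appariement $\langle D_i,\omega\rangle$ est une constante ne d\'ependant que de $h$, et non du point $A$. Comme $\theta^*$ identifie les formes diff\'erentielles invariantes de $\Jac_\mathfrak{m}(C')$ \`a pr\'ecis\'ement ces formes, chaque $D_i$ s'apparie aux formes invariantes par des constantes, c'est-\`a-dire que $\Phi_*D_i$ est un champ invariant par translation, ce qui est l'assertion voulue.
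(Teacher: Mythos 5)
Votre argument est correct, mais il suit une route r\'eellement diff\'erente de celle du papier. La preuve du papier ne refait aucun calcul~: elle observe que les champs $D_{g-1},\dots,D_0$ commutent, donc que leurs images par $\Phi$ commutent, puis d\'eduit de l'invariance d\'ej\`a \'etablie de l'image de $D_{g-1}$ que les images des autres champs, qui commutent avec elle, sont elles aussi invariantes par translation. Vous, au contraire, reprenez pour chaque $D_i$ le calcul explicite men\'e pour $D_{g-1}$~: la formule $D_i(u)=2v[u/x^{i+1}]_+-2u[v/x^{i+1}]_+$ est la bonne entr\'ee $(1,2)$ du crochet de Lax, l'\'evaluation aux racines de $u$ et le passage \`a $2\Res_{x=\infty}\frac{\psi P\beta_i}{u}\,dx$ sont corrects, et votre estimation de degr\'e montrant que la partie $[u/x^{i+1}]_-$ ne contribue pas au r\'esidu \`a l'infini est exactement le point qui rend les appariements constants ($-2p_{i-j}$, resp. $-2\pi_i$)~; pour $i=g-1$ on retrouve bien, \`a un signe de convention pr\`es, le calcul de la proposition pr\'ec\'edente. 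Ce que chaque approche apporte~: celle du papier tient en deux lignes, mais son inf\'erence cl\'e --- commuter avec un seul champ invariant entra\^inerait l'invariance --- n'est pas valable en g\'en\'eral sur un groupe alg\'ebrique non compact (sur $\mathbb{C}^2$, le champ $f(y)\partial_x$ commute avec $\partial_x$ sans \^etre invariant) et demanderait un compl\'ement (densit\'e des orbites du flot de $D_{g-1}$, ou usage simultan\'e des $g$ champs ind\'ependants)~; votre calcul direct contourne enti\`erement cette difficult\'e, est autonome et fournit en prime les constantes explicites de la lin\'earisation. Le seul point \`a ajouter chez vous est une phrase justifiant que sur l'ouvert $M_{g,g}^0(h)$ les racines $x_\ell$ de $u$ \'evitent le support de $\mathfrak{m}$ (sinon $v$ s'y annulerait aussi par $v^2+uw=-P^2h'$ et l'on aurait $\PGCD(P,u,v)\neq 1$), de sorte que l'appariement avec les formes polaires $\frac{dx}{Dz}$ est bien d\'efini.
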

\begin{proof}[Preuve]
Comme les champs de vecteurs $D^{g}_{g-1},\cdots, D^{g}_{0}$ commutent, alors il est de même pour leur image. De
plus comme l'image de $D_{g-1}$ est invariante par translation et comme elle commute avec les images de
$D^{g}_{g-2},\cdots, D^{g}_{0}$, on conclut que les images de $D^{g}_{g-2},\cdots, D^{g}_{0}$ sont invariantes par
translation.
\end{proof}
\begin{prop}\label{fin}
 L'application $\Phi$ définit un isomorphisme entre $M_{g,g}(h)$ et un ouvert de la jacobienne généralisée
 $\Jac_{\mathfrak{m}}(C')$.
\end{prop}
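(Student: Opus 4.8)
The plan is to prove that $\Phi$ is an injective local isomorphism between two smooth varieties of the same dimension $g$, and then to invoke the fact that an injective \'etale morphism is an open immersion. For the dimension count, Lemma \ref{delta} gives $\dim \Jac_{\mathfrak{m}}(C') = \pi = g$, while on the source side $M_g \cong \mathbb{C}^{3g+1}$ and the moment map $\mathbf{H}$ has target the $(2g+1)$-dimensional space $H_g$, so the fiber $M_g(h)$ has dimension $g$; the maximal stratum $M_{g,g}(h)$ is open in it and therefore also has dimension $g$.

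Next I would establish injectivity. It is enough to argue first on the dense open set $M_{g,g}^0(h)$ where $\PGCD(P,u,v)=1$, where Remark \ref{jacrem} gives $\Phi(A)=\theta\big(\sum_{i=1}^g (x_i, v(x_i)/P(x_i))\big)$ with $u(x)=\prod_{i}(x-x_i)$, the divisor $D_A:=\sum_i (x_i, v(x_i)/P(x_i))$ being effective of degree $g=\pi$. Two matrices with the same image give $\mathfrak{m}$-equivalent effective divisors of degree $\pi$, and Lemma \ref{RRL3} guarantees that such a representative is unique; hence $\Phi(A)$ determines $D_A$. From $D_A$ I recover $A$: the support with multiplicities yields $u$, the values $z_i=v(x_i)/P(x_i)$ determine $v$ as the unique polynomial of degree $<g$ interpolating $P(x_i)z_i$ at the $g$ nodes $x_i$, and $w=(h-v^2)/u$ is then forced, being a polynomial because $h(x_i)=P(x_i)^2z_i^2=v(x_i)^2$ for $(x_i,z_i)\in C'$. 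Thus $\Phi$ is injective on $M_{g,g}^0(h)$.

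For the differential I would use the linearization already established. By definition of the maximal stratum the fields $D_0,\dots,D_{g-1}$ are pointwise independent on $M_{g,g}(h)$, so they form a frame of its tangent bundle; by Proposition \ref{prp:lin} each $D_i$ is carried by $\Phi$ to a translation-invariant field on $\Jac_{\mathfrak{m}}(C')$. The duality computation in the proof of Proposition \ref{prp:lin} pairs the $D_i$ with the Serre basis of invariant differentials, which shows that the images $\Phi_* D_0,\dots,\Phi_* D_{g-1}$ are linearly independent in the $g$-dimensional Lie algebra of invariant fields; since a nonzero invariant field vanishes nowhere, these images form a frame at every point of the image. Consequently $d\Phi_A$ sends a basis to a basis and is an isomorphism at each $A$, so $\Phi$ is \'etale.

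An injective \'etale morphism between smooth varieties of the same dimension is an open immersion, so $\Phi(M_{g,g}(h))$ is open in $\Jac_{\mathfrak{m}}(C')$ and $\Phi$ is an isomorphism onto it. The step I expect to be the main obstacle is the uniform passage from the dense open $M_{g,g}^0(h)$ to the whole stratum, where $Q=\PGCD(P,u,v)$ may be nontrivial: both the reconstruction of $A$ from its pole divisor and the independence of the $\Phi_* D_i$ are cleanest on $M_{g,g}^0(h)$. To handle the boundary I would use the refined divisor formula of the well-definedness proof together with Lemma \ref{Deg}, which shows that $u$ (through $Q$ and $u_{Q^2}$), and hence $v$ and $w$, remain reconstructible from the pole divisor, so that injectivity and the \'etale property extend to all of $M_{g,g}(h)$.
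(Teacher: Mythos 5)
Votre démonstration suit essentiellement la même stratégie que celle de l'article : injectivité sur l'ouvert dense $M_{g,g}^0(h)$ via l'unicité du représentant effectif de degré $g$ (Serre / lemme \ref{RRL3}) et la reconstruction de $A$ à partir de son diviseur, puis injectivité locale déduite de la proposition \ref{prp:lin} et de l'égalité des dimensions, d'où une immersion ouverte. Les détails que vous ajoutez (reconstruction explicite de $u$, $v$, $w$ et le passage au bord où $\PGCD(P,u,v)\neq 1$) explicitent des points que l'article affirme sans les développer, mais l'argument est le même.
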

\begin{proof}[Preuve]
Nous montrons d'abord que $\Phi$ est injective. Tout d'abord, $\Phi$ est localement injective, car d'une part les
deux variétés lisses $M_{g,g}(h)$ et $\Jac_{\mathfrak{m}}(C')$ ont la même dimension $g$ et de plus la proposition \ref{prp:lin} révèle que $\Phi$
envoie les $g$ champs de vecteurs indépendants $D_i$ sur $g$ champs de
vecteurs indépendants sur la jacobienne généralisée. En outre $\Phi$ est injective quand elle est restreinte à l'ouvert
dense $M_{g,g}^0(h)$ de $M_{g,g}(h)$:%
\begin{equation*}
 \left(\begin{array}{cc}
 v(x)&u(x)\\
 w(x)&-v(x)
 \end{array}\right)
 \longmapsto\theta\left(\sum\limits_{i=1}^{g}\left(x_{i},\frac{v(x_{i})} {P(x_{i})}\right)\right)\;.
\end{equation*}%
Sur cet ouvert, l'application qui associe à une matrice $A(x)$ un diviseur sur $C'$ est injective. De plus, Serre
montre que si deux diviseurs effectifs \emph{génériques} de degré $g$ sont $\mathfrak{m}$-équivalents, alors ils sont
égaux. Cela veut dire que $\Phi$ est à la fois localement injective et injective sur un ouvert dense. $\Phi$ est
donc injective et son image est un ouvert de $\Jac_{\mathfrak{m}}(C')$.
\end{proof}

\bibliographystyle{plain}

\end{document}